\theoremstyle{plain}
\newtheorem{theorem}{Theorem}[section]
\newtheorem{lemma}{Lemma}[section]
\newtheorem{proposition}{Proposition}[section]
\newtheorem{corollary}{Corollary}[section]
\newtheorem{definition}{Definition}[section]
\newtheorem{remark}{Remark}[section]
\newcommand{\beqn}{\begin{eqnarray}}
	\newcommand{\eeqn}{\end{eqnarray}}
\newcommand{\beqs}{\begin{eqnarray*}}
	\newcommand{\eeqs}{\end{eqnarray*}}
\newcommand{\ban}{\begin{eqnarray*}}
	\newcommand{\nan}{\end{eqnarray*}}
\newcommand{\beq}{\begin{equation}}
	\newcommand{\eeq}{\end{equation}}
\newcommand{\eps}{\varepsilon}
\renewcommand{\det}{\mbox{det}}
\newcommand{\dist}{\mbox{dist}}
\newcommand{\R}{\mathbb{R}}
\numberwithin{equation}{section}
\numberwithin{equation}{section}
\numberwithin{figure}{section}
\begin{document}
	
	\title[$W^{2,1+\epsilon}$ estimates for potentials of optimal transport ]{\textbf{global $W^{2,1+\epsilon}$ regularity for potentials of optimal transport of non-convex planar domains}}
	
\author[S. Hu]
{Shengnan Hu}
\address{Shengnan Hu, MOE-LCSM, School of Mathematics and Statistics, Hunan Normal University, Changsha, Hunan 410081, P. R. China}
\email{helen@hunnu.edu.cn}

\author[Y. Li]{Yuanyuan Li}
\address{Yuanyuan Li, Institute for Theoretical Sciences, Westlake University, Hangzhou, 310030, China.}
\email{lyyuan@westlake.edu.cn}

\thanks{Research of the first author was supported by National Natural Science Foundation of China Youth Science Fund Project (Class C) (No. 12501260). Research of the second author was supported by China Postdoctoral Science Foundation (No. 2025M783144).
}	
	
\subjclass[2000]{35J96, 35J25, 35B65.}

\keywords{Optimal transportation, Monge-Amp\`ere equation, Sobolev regularity} 

\date{\today}

\dedicatory{}

\keywords{}
	\begin{abstract} 
		In this paper, we investigate the optimal transport problem when the source is a non-convex polygonal domain in $\mathbb{R}^2$. We show a global $W^{2,1+\epsilon}$ estimate for potentials of optimal transport. Our method applies to a more general class of domains.
	\end{abstract}
	
	\maketitle
	
	\baselineskip=16.4pt
	\parskip=3pt
	
	\section{Introduction}
	Let $\Omega$ and $\Omega^*$ be bounded domains in $\mathbb{R}^2$. Let $f, g \in L^1(\mathbb{R}^2)$ represent two probability densities supported on $\Omega$ and $\Omega^*$, respectively, satisfying $\frac{1}{\lambda} \leq f, g \leq \lambda$ for a positive constant $\lambda$. We assume that $\Omega^*$ is convex. By Brenier's theorem \cite{Br1},
	there exists a globally Lipschitz convex function $u$ such that
	\begin{equation}\label{be1}
		(\nabla u)_\sharp f=g.
	\end{equation}
	Note that the optimal map is exactly given by $\nabla u.$

	
	The regularity of optimal transport maps has been extensively studied; see \cite{C92, C92b, C96, CL2, CLW1, CLW2, U1}. In the case of a non-convex source domain, when the densities are bounded between positive constants, the target domain is convex, and the source domain is a convex set with finitely many convex holes removed, Andriyanova and Chen provided global $C^{1,\alpha}$ estimates for the potentials of optimal transport maps in any dimension in \cite{AC}. Further assuming the smoothness of the densities and the regularity properties of the holes, Mooney and Rakshit proved a sharp global $W^{2,p}$ estimate in \cite{CR} in dimension two. Note that the positive distances from the holes to the boundary and between the holes are crucial in the above proof. 
	
	In our case, the source domain is a bounded non-convex polygon. Optimal transport between polygonal domains is of significant practical value. For instance, in numerical computation, particularly in finite element methods, generating high-quality meshes over polygonal domains is of paramount importance. Optimal transport can be employed to design mesh generation algorithms capable of distributing mass over polygonal domains, thereby enhancing simulation accuracy and efficiency. Readers are advised to refer to \cite{Levy,Mer} and the literature cited therein. Note that the global $C^1$ regularity always holds without any condition on $\Omega$, which is a classical result of Alexandrov; see also \cite{Fig4}. Our main result is:
	
	\begin{theorem}\label{t1}
		Let $\Omega$ be a bounded non-convex polygonal domain, $\Omega^*$ be a bounded convex domain in $\mathbb{R}^2$, and $u$ be an Alexandrov solution to \eqref{be1}. If $\frac{1}{\lambda} \leq f, g \leq \lambda$ within $\Omega$ and $\Omega^*$, respectively, for some positive constant $\lambda$, then $u\in W^{2, 1+\epsilon}(\overline{\Omega}).$
	\end{theorem}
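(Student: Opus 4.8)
The plan is to exploit the local theory of Figalli--Kim--De Philippis (interior $W^{2,1+\epsilon}$ estimates for the Monge--Amp\`ere equation with bounded right-hand side) away from the vertices of $\Omega$, and to handle the finitely many corner points by a separate local argument. More precisely, let $p_1,\dots,p_N$ be the vertices of the polygon $\Omega$. Along $\partial\Omega\setminus\{p_1,\dots,p_N\}$ the boundary is flat, and Caffarelli's boundary regularity theory (in the form adapted to optimal transport between a domain with flat boundary piece and a convex target, cf. \cite{C96, U1}) gives that $u\in C^{1,\alpha}$ up to each open edge; combined with the interior estimate this yields $u\in W^{2,1+\epsilon}$ on $\overline{\Omega}\setminus\bigcup_i B_r(p_i)$ for every $r>0$, with a bound depending on $r$. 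The whole game is therefore reduced to a neighborhood of a single vertex $p$.

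Near a vertex $p$ with interior angle $\theta$, I would distinguish the convex case $\theta<\pi$ from the non-convex (reflex) case $\theta>\pi$, the latter being the essential difficulty. For the convex corner one can use the localization/affine-renormalization machinery: the sections $S_h(p)$ of $u$ at the vertex are comparable to ellipsoids, and after John's lemma normalization one reduces to the situation covered by the boundary estimates of \cite{AC} or by Savin's boundary Harnack, giving a $C^{1,\alpha}$ bound at the vertex and hence $W^{2,1+\epsilon}$ nearby. The reflex corner cannot be treated this way: $u$ need not even be $C^{1,\alpha}$ with a controlled modulus there, and the section $S_h(p)$ can degenerate. The strategy here is to show that, although the second derivatives may blow up as one approaches $p$, the blow-up rate is integrable to a power $1+\epsilon$. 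Concretely, I would fix a small ball $B_\rho(p)$, and for each dyadic annulus $A_k = B_{2^{-k}\rho}(p)\setminus B_{2^{-k-1}\rho}(p)$ perform an affine rescaling $u_k(x) = a_k^{-2}u(p + a_k x)$ that maps $A_k$ to a unit-size region; the rescaled potential solves the same type of equation on a domain whose boundary, near the relevant scale, looks like a fixed reflex wedge (plus a vanishing perturbation), and the rescaled target is still uniformly convex up to affine normalization. One obtains from the local $W^{2,1+\epsilon}$ theory a bound $\int_{A_k}|D^2 u|^{1+\epsilon}\le C\, |A_k|^{?}$ independent of $k$, and the point is to track the exponent of $2^{-k}$ carefully enough that $\sum_k$ converges.

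The main obstacle — and where the real content lies — is precisely the uniform control at a reflex vertex. Two things have to be established: first, a geometric comparison showing that the sections of $u$ centered near a reflex corner do not become too elongated, which amounts to a lower bound on the density of the image measure on half-balls at the corner and should follow from the convexity of $\Omega^*$ together with the fact that $\nabla u$ maps $\Omega$ onto (a.e. of) $\Omega^*$; and second, a covering/summation estimate showing that the contribution of the corner to $\int_\Omega |D^2u|^{1+\epsilon}$ is finite for $\epsilon$ small, which requires the per-scale estimate to degenerate at worst polynomially. I expect one must also verify that near a reflex vertex the convex function $u$ extends across the corner in a controlled way — e.g. that $u$ restricted to each of the two edges meeting at $p$ is semiconcave with a uniform constant — so that the rescaled domains converge to an honest wedge rather than something with cusps. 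This semiconcavity-at-the-edge input, which is where the flatness of $\partial\Omega$ and the convexity of $\Omega^*$ genuinely enter, is the crux; once it is in hand, the dyadic summation is routine. The hypothesis that $\Omega$ is a \emph{polygon} (rather than merely piecewise-$C^{1,1}$) is used to guarantee that each rescaled picture is a fixed wedge up to lower-order corrections, so that a single scale-invariant estimate can be iterated.

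(If the remark at the end of the introduction — ``our method applies to a more general class of domains'' — is to be taken seriously, the polygon hypothesis should only be needed to make the wedge model exact; domains whose boundary is $C^{1,1}$ away from finitely many corner points with well-defined one-sided tangent lines would work verbatim.)
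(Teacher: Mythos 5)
Your reduction to a neighborhood of a single vertex is reasonable, and you correctly identify the reflex corner as the essential difficulty, but the argument you propose there has a genuine gap: you invoke ``the local $W^{2,1+\epsilon}$ theory'' on each rescaled dyadic annulus as if a scale-invariant estimate at a reflex wedge were available off the shelf, when in fact producing exactly that uniform estimate is the content of the theorem. At a reflex vertex the Monge--Amp\`ere measure of $u$ vanishes on the convex region $B_{r}(b_i)\setminus\Omega$ touching the vertex, so the equation is degenerate there and none of the interior or convex-boundary $W^{2,1+\epsilon}$ results (De Philippis--Figalli--Savin, Savin--Yu \cite{SY2}, Caffarelli \cite{C96}) apply to your rescaled pictures; moreover your rescaling $u_k(x)=a_k^{-2}u(p+a_kx)$ is an isotropic dilation, whereas the sections of $u$ near the corner are a priori anisotropic, so there is no reason the rescaled solutions on the annuli are normalized in any useful sense, nor that the per-scale constants are independent of $k$. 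You acknowledge this (``where the real content lies''), together with an unproved ``semiconcavity along the edges'' input, but acknowledging the crux is not supplying it. What is actually needed, and what the paper provides, is: (i) a localization/doubling property for small-height centered sections at \emph{every} point of $\overline\Omega$, which is nontrivial precisely because $\Omega$ is not convex -- it rests on showing each boundary point is an extreme point of its contact set $\{u=L\}$ (the extension of $u$, the Legendre transform, and the convexity of $\Omega^*$ enter here); (ii) uniform geometric decay and an engulfing property for sections near the reflex vertex, proved by compactness and ruled-out degenerate limits via the Liouville-type Lemma 6.1 of Mooney--Rakshit \cite{CR} (no convex function on a half-ball with linear boundary data on the diameter and Monge--Amp\`ere measure bounded below on the half); and (iii) a decomposition of the corner neighborhood into the two half-regions determined by the edges, since engulfing is only established for the corresponding half-sections, followed by the De Philippis--Figalli--Savin covering and level-set iteration \cite{DFS}. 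None of these steps is reachable by the dyadic-annulus scheme as you describe it.

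A secondary issue: even away from the vertices you cannot simply quote Caffarelli's boundary theory ``adapted'' to a flat boundary piece, because that theory presupposes the doubling property of the Monge--Amp\`ere measure on sections, which can fail for a non-convex source (the contact set of a supporting affine function could, a priori, leave $\Omega$ and pick up no mass). The paper must first prove the localization lemma (via the extreme-point argument) before the convexity of $S^c_h(x)\cap\Omega$ away from concave vertices lets the Savin--Yu argument \cite{SY2} run. So both the easy region and the hard region in your outline rest on unproved inputs; as written, the proposal is a plausible program rather than a proof, and its central per-scale estimate at the reflex wedge would not go through without essentially reconstructing the section-based arguments of the paper.
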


	
	With the convexity of \(\Omega\), Caffarelli proved the doubling property of the Monge-Ampère measure associated with the potential function \(u\). In the case of a non-convex polygonal source domain, the doubling property may fail for a general convex set centered at some point in the closure of the domain. It is crucial to first establish the following doubling property for sections with small height.
	\begin{proposition}\label{doubling property}
		Under the same assumptions as in Theorem \ref{t1}, there exists a constant $h_0$ that depends on the geometry of $\Omega$ such that, for any $x \in \Omega$, we have
		$$|\nabla u(S^c_h(x))| \leq C |\nabla u\left(\frac{1}{2} S^c_h(x)\right)|,$$
		for some universal constant $C > 0$ and for all $h < h_0$.
	\end{proposition}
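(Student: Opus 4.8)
\emph{Strategy.} Two facts will carry the proof. First, the transport identity $(\D u)_\sharp f=g$ together with the (Lebesgue‑a.e.) invertibility of the Brenier map gives the two–sided comparison of Monge–Amp\`ere measures
\[
\lambda^{-2}|E|\le|\D u(E)|\le\lambda^{2}|E|\qquad\text{for every Borel }E\subseteq\Om,
\]
and the symmetric bound $\lambda^{-2}|F|\le|\D u^{*}(F)|\le\lambda^{2}|F|$ for $F\subseteq\Om^{*}$, where $u^{*}$ is the Legendre conjugate of $u$ (here one uses that $Mu$ has no atoms on $\pom$, which follows from global differentiability of $u$ on $\bom$). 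Second, under the present hypotheses $u\in C^{1,\alpha}(\bom)$ by the global estimate of Andriyanova–Chen \cite{AC}; equivalently $0\le u(y)-\ell_{x}(y)\le C|y-x|^{1+\alpha}$ along any segment $[x,y]\subseteq\bom$, $\ell_{x}$ being the supporting plane of $u$ at $x$. The first fact turns the doubling property into a statement about \emph{Lebesgue measure}; the second controls how the sections $S_h(x)$ degenerate near $\pom$ (in particular each section contains a ball about its centre of radius $\gtrsim h^{1/(1+\alpha)}$, truncated only by $\pom$, and $u$ enjoys a uniform modulus of strict convexity along segments contained in $\bom$).

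\emph{Reduction and the elementary cases.} By the displayed comparison, proving $|\D u(S^c_h(x))|\le C\,|\D u(\tfrac12 S^c_h(x))|$ reduces to showing $\tfrac12 S^c_h(x)\subseteq\Om$ together with the geometric inequality $|S^c_h(x)|\le C\,|\tfrac12 S^c_h(x)|$. If $S_h(x)\Subset\Om$ this is immediate, since $\tfrac12 S_h(x)$ has exactly one quarter of the measure of $S_h(x)$. If $S_h(x)$ meets $\pom$ only along the relative interior of an edge, or at a convex vertex of $\Om$, then in a neighbourhood of the contact set $\Om$ coincides with a convex set (a half–plane, respectively a convex wedge); hence $S^c_h(x)$ is convex and uniformly balanced, and the same quarter–measure identity applies after an affine normalisation, the uniform strict convexity from \cite{AC} guaranteeing that $S^c_h(x)$ and its dilate fit inside that locally convex piece once $h<h_0$.

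\emph{The main case, and the obstacle.} What is genuinely new — and what forces the restriction to small heights, and makes the ``general convex set'' version of doubling fail — is the situation in which $S_h(x)$ reaches a reflex vertex $V$ of $\Om$, or more generally ``looks across'' a pocket of $\Om$: there $S^c_h(x)$ need not be convex, may be disconnected, and its centre of mass may fall outside $\Om$, so that neither inclusion nor the measure inequality is automatic. I would handle this by passing to the convex side: replace $S^c_h(x)$ by the section $S^{*}_h(p)$ of $u^{*}$ at $p=\D u(x)\in\Om^{*}$, apply Caffarelli's doubling theorem for the Monge–Amp\`ere equation with density comparable to a constant on the \emph{convex} domain $\Om^{*}$ (valid in the interior and, by convexity of $\Om^{*}$, up to $\partial\Om^{*}$), and transfer the conclusion back through $\D u^{*}$, which carries $\Om^{*}$ onto $\bom$; the choice of $h_0$ (depending on the geometry of $\Om$, equivalently on a modulus of continuity for $\D u^{*}$ supplied by \cite{AC}) is exactly what ensures $S^{*}_h(p)$ is small enough that $\D u^{*}(S^{*}_h(p))$ and $\D u^{*}(\tfrac12 S^{*}_h(p))$ land inside $\Om$ and match $S^c_h(x)$ and $\tfrac12 S^c_h(x)$. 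Alternatively, one argues directly inside $\Om$: fix a half–plane $W$ with vertex at $V$ and $W\subseteq\Om$ near $V$, rotated so that $x$ sits at distance comparable to $\diam S_h(x)$ from $\partial W$ (possible unless $x$ is itself that close to an edge, a case already treated); then $S_h(x)\cap W$ is a convex, uniformly balanced subset of $S^c_h(x)\subseteq\Om$ on which the elementary argument runs, and one must compare $|S^c_h(x)|$ and $|S_h(x)\cap W|$ using that $\Om\setminus W$ lies in a wedge of opening $<\pi$ at $V$ while $S_h(x)$ contains a ball bounded away (on the scale of the section) from that wedge. Either way, the heart of the matter — and what I expect to be the main obstacle — is making the passage quantitative and \emph{uniform in $x\in\Om$}: checking that $\tfrac12 S^c_h(x)\subseteq\Om$ and that no definite fraction of $S^c_h(x)$ is ``lost into a pocket'' under dilation. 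This is exactly the step that must combine the non‑convexity of $\Om$, its polygonal structure, and the $C^{1,\alpha}$ estimate of \cite{AC}.
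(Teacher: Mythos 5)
Your proposal has two genuine gaps, and they sit exactly where the real work of this proposition lies. First, your substitute for localisation is unjustified: you invoke the global $C^{1,\alpha}$ estimate of \cite{AC}, but that result is proved for a convex domain with finitely many convex \emph{interior} holes removed, and it uses the positive distance from the holes to the boundary in an essential way; here $\Omega$ is a non-convex polygon whose reflex vertices sit on the outer boundary, so no global $C^{1,\alpha}$ estimate is available a priori (only Alexandrov's global $C^1$). Consequently your claims that each section contains a ball of radius $\gtrsim h^{1/(1+\alpha)}$ and that $u$ has a uniform modulus of strict convexity on $\overline{\Omega}$ are unsupported. The paper replaces this input by Lemma \ref{extreme} (every boundary point is an extreme point of its own contact set, proved through the dual potential $v$ and a rotating-supporting-plane argument) and then a compactness argument (Lemma \ref{localisation1}) showing $S^c_h(x)\subset B_r(x)$ uniformly in $x\in\overline{\Omega}$ for $h\le h_r$; this is the step your outline is missing. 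Relatedly, your reduction asks for $\frac{1}{2}S^c_h(x)\subseteq\Omega$, which is false for $x\in\partial\Omega$: the centered section is balanced about $x$, so a definite portion of it lies outside $\Omega$. The correct reduction, and the one the paper uses together with $\det D^2 u\approx \chi_\Omega$, is $|S^c_h(x)\cap\Omega|\le C\,|\frac{1}{2}S^c_h(x)\cap\Omega|$.

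Second, your primary route for the main case — passing to sections of $u^*$ at $p=\nabla u(x)$ and applying Caffarelli's doubling theorem on the convex domain $\Omega^*$ — does not work: the potential $u^*$ on $\Omega^*$ transports onto the \emph{non-convex} domain $\Omega$, so its Monge--Amp\`ere measure satisfies only a lower bound, not an upper bound, by a constant multiple of Lebesgue measure (the subdifferential can sweep the convex hull of $\Omega$ across the reflex notch, producing singular mass there). Caffarelli's doubling theory is therefore not available for $u^*$; indeed this failure is precisely the difficulty the proposition is designed to circumvent. Your alternative ``direct'' argument near the reflex vertex is in the right spirit and is essentially the paper's case analysis in Lemma \ref{dlem1}: once sections are uniformly small, either $x$ is away from the reflex vertices and $S^c_h(x)\cap\Omega$ is convex, giving $S^c_h(x)\cap\Omega\subset 2\left(\frac{1}{2}S^c_h(x)\cap\Omega\right)$, or $x\in B_{r_1}(b_i)$ with $b_i$ reflex, where $B_{2r_1}(b_i)\setminus\Omega$ is a convex wedge and the balancedness of the centered section about $x\in\overline{\Omega}$ yields $|S^c_h(x)\cap\Omega|\approx|S^c_h(x)|\approx|\frac{1}{2}S^c_h(x)\cap\Omega|$. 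But you explicitly leave open the uniform-in-$x$ smallness and non-degeneracy of sections that legitimises this case split — the very point you call ``the main obstacle'' — so as written the proposal does not close the proof.
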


	The key to proving the above doubling property is to show that the sections are well localized. Specifically, any section centered in $\overline{\Omega}$ will be sufficiently small if its height is small. Once the sections are localized, the area of $S^c_h(x) \cap \Omega$ near the concave vertex is a positive portion of the area of $S^c_h(x)$, from which the doubling property follows.  
	
	Another issue to address is that near the concave vertex, it is not clear whether the engulfing property holds for all centered sections. Our strategy is to decompose the domain near this vertex into two sets and prove that the intersections of sections with each set satisfy the engulfing property.  
	
	The rest of the paper is organized as follows. In Section 2, we introduce some notation and preliminaries. Section 3 is devoted to the proof of the doubling property, specifically Proposition \ref{doubling property}. In the final section, we complete the proof of Theorem \ref{t1}.

	\section{Preliminaries}\label{S2}
	For the remainder of the paper, we will say that a constant is \textbf{universal} if it depends only on the dimension and $\lambda$, and is independent of the specific geometry of $\Omega$ or $\Omega^*$. We say that $a$ and $b$ satisfy $a \sim b$ if and only if there exist positive universal constants $c_1$ and $C_1$ such that $c_1 b \leq a \leq C_1 b$.  
	
	Suppose that $\Omega$ and $\Omega^*$ satisfy the hypotheses of Theorem \ref{t1}. Let $u$ be a globally Lipschitz convex solution to \eqref{be1} with $\nabla u(x) \in \Omega^*$ for almost every $x \in \mathbb{R}^2$. Similarly, as established in \cite{B}, there exists a globally Lipschitz convex function $v$ satisfying  
	\begin{equation}\label{bre2}
		(\nabla v)_\sharp g = f,
	\end{equation}
	with $\nabla v(y) \in \overline{\Omega}$ for almost every $y \in \mathbb{R}^2$. 
	
	Moreover, since the target domain $\Omega^*$ is convex, by \cite{C92}, if we extend $u$ to $\mathbb{R}^2$ via  
	\begin{equation}\label{extension}
		\tilde{u} := \sup \left\{L \mid L \text{ is linear, } L|_{\Omega} \leq u, \text{ and } L(x) = u(x) \text{ for } x \in \Omega \right\},
	\end{equation}
	then $\tilde{u}$ satisfies the Monge-Ampère inequality in the Alexandrov sense:  
	\begin{equation}\label{global eq}
		\frac{1}{C}\chi_\Omega \leq \det(D^2 \tilde{u}) \leq C\chi_\Omega \quad \text{in } \mathbb{R}^2,
	\end{equation} 
	where $C = \lambda^2$ is a universal constant.  
	We will continue to use $u$ to denote this extended function. Note that $u$ is $C^1$ in $\mathbb{R}^2$ and strictly convex in $\Omega$; see, for example, \cite{C93} or \cite[Lemma 2.3]{Mooney}.

	Denote $u^*$ as the Legendre transform of $u,$ namely, for any $y\in \mathbb{R}^2,$
	$$u^*(y):=\sup_{x\in\mathbb{R}^2}\{y\cdot x-u(x)\}.$$
	Note that $u^*=v$ in $\Omega^*.$
	\begin{definition}
		Given an open set $\Omega\subset \mathbb{R}^2$ and $w:\Omega\to \mathbb{R}$ a convex function, for $x\in \Omega,$ the subdifferential of $u$ at $x$ is defined as
		$$ \partial^{-}w(x):=\left\{y\in \mathbb{R}^2:w(z)\geq w(x)+y\cdot (z-x)\quad \forall z\in \Omega\right\}.
		$$
	\end{definition}
	It is immediate to check that $\partial^{-} w(x)$ is a  closed convex set. 
	
	Note that the subdifferentials of $u$ and $u^{*}$ are inverse to each other (seen in \cite{R}), that is
	\begin{equation}\label{eq}
		p\in \partial^{-}u(x)\Leftrightarrow x\in \partial^{-}u^*(p).
	\end{equation}
	
	\begin{definition}
		Given a point $x_0\in \mathbb{R}^2$ and a small positive constant $h$, we define the centred sub-level set of $u$ at $x_0$ with height $h$ as:
		\begin{equation}\label{sect}
			S^c_{h}(x_0) := \left\{x\in\mathbb{R}^2 : u(x)< u(x_0) + (x-x_0)\cdot \bar{p} + h\right\},
		\end{equation}
		where $\bar{p}\in \mathbb{R}^2$ is chosen such that the centre of mass of $S^c_{h}(x_0)$ is $x_0$. 
	\end{definition}
	By \cite[Lemma 2.1]{C96}, $S^c_h(x_0)$ is balanced with respect to $x_0,$ namely,
	$$x\in S^c_h(x_0) \Longrightarrow -Cx\in S^c_h(x_0)$$
	for some universal constant $C>0$ independent of $h.$
	\begin{remark}\label{center bound}
		If $u(0)=0$ and $u\geq 0,$ by \cite[Remark 2.2]{CLW3}, we have that $$0\leq u\leq Ch \text{  in   }S^c_h(0),$$
		for some universal constant $C>0.$ 
	\end{remark}
	In the following, we collect some important properties of sections used for the subsequent analysis. 
	\begin{lemma}[John's lemma \cite{J}]\label{John'sLemma}
		Let $S$ be a bounded convex subset $\R^2$ with its centre of mass at the origin. There is an ellipsoid $E$ with the origin as its center of mass such that $$E\subset S\subset  C_d E,$$ for some universal constant $C_d.$ 
	\end{lemma}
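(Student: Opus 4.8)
The plan is to deduce the statement from the classical theorem of Fritz John: every bounded convex body $S\subset\R^d$ contains a unique ellipsoid $E_0$ of maximal volume, and if $z_0$ denotes the centre of $E_0$, then $E_0\subseteq S\subseteq z_0+d(E_0-z_0)$. All that remains is to upgrade this to an ellipsoid centred at the centre of mass of $S$ (here the origin), at the cost of enlarging the dimensional constant; in dimension $d=2$ this will give $C_d=6$.

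First I would normalise: writing $\hat E:=E_0-z_0$ for the John ellipsoid recentred at the origin, John's theorem reads $z_0+\hat E\subseteq S\subseteq z_0+d\,\hat E$. Since the centre of mass $0$ lies in $S$, it lies in $z_0+d\hat E$, so $-z_0\in d\hat E$, and by the central symmetry of $\hat E$ also $z_0\in d\hat E$. This already yields the outer inclusion: $S\subseteq z_0+d\hat E\subseteq d\hat E+d\hat E=2d\,\hat E$, where I use that the Minkowski sum of two concentric dilates of an ellipsoid is the corresponding dilate.

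For the inner inclusion I would invoke the balancedness of the centre of mass, i.e.\ the classical Minkowski--Radon fact (the same mechanism already recorded for sections $S^c_h(x_0)$ in the text): if $0$ is the barycentre of a convex body $K\subset\R^d$, then $y\in K\Rightarrow -\tfrac1d\,y\in K$. Applying this to each point $z_0+y$ with $y\in\hat E$ gives $-\tfrac1d z_0+\tfrac1d\hat E\subseteq S$. Since $S$ is convex, it contains every convex combination of the two inscribed sets $z_0+\hat E$ and $-\tfrac1d z_0+\tfrac1d\hat E$; taking the weight $t=\tfrac1{d+1}$ on the first set cancels the $z_0$-component and leaves precisely $\tfrac1{d+1}(\hat E+\hat E)=\tfrac2{d+1}\hat E\subseteq S$. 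Setting $E:=\tfrac2{d+1}\hat E$, an ellipsoid centred at the origin, we conclude $E\subseteq S\subseteq 2d\,\hat E=d(d+1)\,E$, which is the assertion with $C_d=d(d+1)$.

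The main obstacle is simply that John's ellipsoid is centred at its own centre rather than at the centre of mass; bridging that gap is the only non-formal point, and it rests entirely on the Minkowski--Radon balancedness of the barycentre, after which everything is elementary manipulation of concentric ellipsoids. If one wishes to avoid citing John's theorem, an alternative route is to take $E$ to be the ellipsoid associated with the covariance matrix of the uniform probability measure on $S$: this ellipsoid is automatically centred at the barycentre, and the two-sided inclusion then follows from Hensley-type slicing estimates for convex bodies, again with a purely dimensional constant.
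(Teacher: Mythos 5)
Your argument is correct. The paper itself offers no proof of this lemma: it is stated as a citation of John's original paper \cite{J}, with the added remark that the centring at the centre of mass is not part of the classical statement and that one can in fact take $C_d=2^{3/2}$ in the plane (the constant $n^{3/2}$ familiar from the Caffarelli--Guti\'errez normalization). Your route is genuinely different in that it is self-contained: you start from the classical John theorem $E_0\subseteq S\subseteq z_0+d(E_0-z_0)$ for the maximal-volume ellipsoid and then re-centre using the Minkowski--Radon balancing property of the barycentre ($y\in S\Rightarrow -y/d\in S$ when the centroid is at $0$), which is exactly the same mechanism the paper invokes, via \cite[Lemma 2.1]{C96}, for the sections $S^c_h(x_0)$. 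All the individual steps check out: $-z_0\in d\hat E$ and hence $z_0\in d\hat E$ by symmetry, $S\subseteq z_0+d\hat E\subseteq 2d\hat E$ using $aK+bK=(a+b)K$ for convex $K$, the inclusion $-\tfrac1d z_0+\tfrac1d\hat E\subseteq S$, and the convex combination with weight $t=\tfrac1{d+1}$ giving $\tfrac2{d+1}\hat E\subseteq S$, so $C_d=d(d+1)=6$ in dimension two. What you lose relative to the cited version is only the sharpness of the constant ($6$ versus $2^{3/2}$), which is irrelevant here since the lemma asks only for a universal constant; what you gain is an elementary proof that does not rely on looking up the centred form of John's theorem. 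The one small point worth flagging is that the existence of the maximal-volume ellipsoid (and the truth of the statement itself) requires $S$ to have nonempty interior, so one should read ``bounded convex subset'' as a bounded convex body, which is the case for all the sections to which the lemma is applied.
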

	For a set $S$ and a positive constant $\alpha,$ we say $\alpha S$ as the dilation of $S$ by a factor of $\alpha$ with respect to the center of $S.$ The original John's Lemma dose not require that the centre of mass is at the origin, and we can actually take $C_d=2^{3/2}.$

	\section{Doubling property for sections with properly choosing height}\label{S3}
	
	In this section we prove a localisation property and establish the doubling property for the Monge-Amp\`{e}re measure associated with $u.$
	
	Since $\Omega$ is a non-convex polygonal domain with the vertices given by $b_i\in\mathbb{R}^2$, $i=1,\cdots, m$, and the edges given by $[b_ib_{i+1}]$, $i=1, \cdots, m,$ with $b_{m+1}:=b_1$ and $b_0=b_m.$ Let $V$ be the set of the vertices of $\Omega,$ namely, $V=\left\{b_1, \cdots ,b_m\right\}.$ Let $V_1$ be the set of concave vertices and $V=V_1\cup V_2,$ where $V_2:=V\setminus V_1.$
	
	Given a closed convex set $\Sigma,$ we say $p\in \Sigma$ is an extreme point of $\Sigma$ if $p$ cannot be expressed as a convex combination $p=\lambda_0 p_0+(1-\lambda_0) p_1$ of points $p_0,p_1\in \Sigma$ with $\lambda_0\in (0,1)$ unless $p_0=p_1.$ Denote by $\text{ext}(\Sigma)$ the set of extreme points of $\Sigma.$ Similarly, a direction $q$ in the recession cone $\text{rc}(\Sigma):=\lim_{r\rightarrow 0} r \Sigma$ is extreme if $q=\lambda_0 q_0+(1-\lambda_0) q_1$ with $\lambda_0\in (0,1)$ forces $q_0$ to be nonnegative scalar multiple of $q_1$ or vice versa. A variation of Minkowski's theorem given by Rockafellar \cite[Theorem 18.5]{R} asserts that any closed convex set which does not contain a full line, can be expressed as the convex hull of its extreme points plus its extreme directions: $\Sigma =\text{conv} (\text{ext}(\Sigma) + \text{rc}(\Sigma)).$
	
	For any $x_0\in \partial \Omega,$ by a translation of coordinates, we may assume $x_0=0.$ By subtracting an affine function, we may assume $u\geq 0$ and $u(0)=0.$ Denote $\Sigma:=\left\{x\in \mathbb{R}^2: u(x)=0\right\},$ which is a closed convex set.
	By \cite{C92}, we have that $u$ is strictly convex in $\Omega,$ hence $\Sigma\cap \Omega=\emptyset.$
	Note also that $\text{ext} (\Sigma)\subset \partial \Omega,$ since otherwise 
	$\text{ext} (\Sigma)\cap(\mathbb{R}^2\setminus\overline{\Omega})\ne\emptyset,$
	which implies that the Monge-Amp\`ere measure $\det D^2u$ has positive mass in  
	$\mathbb{R}^2\setminus\overline{\Omega}$ contradicting to \eqref{global eq}.
	From equation \eqref{eq}, we have that $\Sigma=\partial^{-}u^*(0)$ since $u(x)=0$ holds if and only if $x\in \Sigma.$
	\begin{lemma}\label{extreme}
		Let $u,u^*$ and $\Sigma$ be as above. Then $0\in \text{ext}(\Sigma).$
	\end{lemma}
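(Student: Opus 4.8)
The plan is to argue by contradiction: suppose $0\notin\text{ext}(\Sigma)$, and reduce this to a concrete degeneracy. Since $0$ is not an extreme point of the convex set $\Sigma$, it lies in the relative interior of a nondegenerate segment contained in $\Sigma$; shrinking it we may take $0$ to be its midpoint, say $I=[-\tau e,\tau e]\subset\Sigma$ with $e\neq 0$ and $\tau>0$. Because $\Sigma=\{u=0\}$, we have $u\equiv 0$ on $I$; as $u\ge 0$, $u(0)=0$ and $u\in C^1(\mathbb R^2)$, every point of $I$ is a global minimum of $u$, so $\nabla u\equiv 0$ on $I$ as well. Finally, the strict convexity of $u$ in $\Omega$ gives $\Sigma\cap\Omega=\varnothing$ (already noted in the excerpt), so $I\subset\mathbb R^2\setminus\Omega$. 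Thus a failure of the lemma produces a genuine segment through the boundary point $0$, avoiding $\Omega$, along which both $u$ and $\nabla u$ vanish.

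The decisive case is that $0$ is a concave (reflex) vertex of $\Omega$. Let $W$ be the tangent cone of $\Omega$ at $0$; since $\Omega$ is polygonal there is $r>0$ with $\Omega\cap B_r=\mathring W\cap B_r$, where $\mathring W$ is the open reflex wedge, of opening $>\pi$. Hence $B_r\setminus\Omega=B_r\cap(\mathbb R^2\setminus\mathring W)$, and $\mathbb R^2\setminus\mathring W$ is a \emph{closed convex cone} with apex $0$ of opening $2\pi-(\text{interior angle})<\pi$. For $r$ small the two half-segments of $I$ issuing from $0$ lie in $\mathbb R^2\setminus\Omega$, hence in this cone, so both opposite directions $e$ and $-e$ belong to it; but a convex cone of opening strictly less than $\pi$ cannot contain an antipodal pair of directions. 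This contradiction proves $0\in\text{ext}(\Sigma)$ at a concave vertex — precisely the localization near the concave vertex announced in the introduction.

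For a general $x_0\in\partial\Omega$ one couples this with the dual potential, using $\Sigma=\partial^{-}u^*(0)$ and $0=\nabla u(x_0)$. If $0$ is an interior point of $\Omega^*$, then $u^*=v$ near $0$, and $v$ is an Alexandrov solution of $\lambda^{-2}\le\det D^2v\le\lambda^2$ on the convex domain $\Omega^*$ (from $(\nabla v)_\sharp g=f$ and the bounds on $f,g$); by Caffarelli's interior regularity $v$ is differentiable at $0$, so $\Sigma=\partial^{-}v(0)$ is a single point and $0$ is trivially extreme. If $0\in\partial\Omega^*$, then $|\Sigma|=|\partial^{-}u^*(0)|=|\{x:\nabla u(x)=0\}|=0$, because $\{x:\nabla u(x)=0\}\subset(\nabla u)^{-1}(\partial\Omega^*)$ and this preimage is Lebesgue–null ($(\nabla u)_\sharp f=g$, $g(\partial\Omega^*)=0$, $f\ge 1/\lambda$). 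Hence $\Sigma$ is at most one-dimensional; since $\text{rc}(\Sigma)=N_{\Omega^*}(0)$ is a pointed cone, $\Sigma$ is a point, a segment, or a ray. A point leaves nothing to prove; otherwise the segment $I$ from the first step lies along the direction of $\Sigma$, and one finishes exactly as in the concave–vertex case. For the remaining boundary points (edges and convex vertices) $\Omega$ is locally convex at $0$, and the surviving configurations are excluded by combining the above with the fact that a kink of $u^*$ at $0$ transverse to the normal cone $N_{\Omega^*}(0)$ is incompatible with the lower bound $\det D^2u^*\ge\lambda^{-2}$.

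The main obstacle is the localization step: one must control the zero set $\Sigma$ — equivalently, the sections of $u$ — near $0$ tightly enough to see that it cannot elongate along a segment through a reflex corner. Granted that, the concave–vertex case is immediate convex geometry, and the other boundary points reduce to Caffarelli's interior regularity of the dual potential together with the absolute continuity of its Monge–Ampère measure on $\Omega^*$.
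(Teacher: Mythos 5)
Your opening reduction is correct: a failure of extremality yields a nondegenerate segment $I=[-\tau e,\tau e]\subset\Sigma$ through $0$ on which $u$ and $\nabla u$ vanish, and since $\Sigma\cap\Omega=\emptyset$ the segment avoids $\Omega$. The concave-vertex step is also correct and quite clean: near a reflex vertex the complement of $\Omega$ is a closed convex cone of opening $<\pi$, which cannot contain a pair of antipodal directions. This is genuinely different from the paper, which never uses the local tangent-cone geometry at the vertex and instead runs a single argument through the dual potential $v=u^*|_{\Omega^*}$ and the reachable-gradient set of $v$ at $0$, combined with the Rockafellar decomposition $\partial^-u^*(0)=\mathrm{conv}(\mathrm{ext}+\mathrm{rc})$ and a contradiction via the recession cone when $0\in\partial\Omega^*$.

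However, the remaining cases — which the lemma does assert, since it is invoked in Lemma \ref{localisation1} at arbitrary boundary points — contain genuine gaps. First, for $0\in\Omega^*$ you invoke ``Caffarelli's interior regularity'' to conclude $v$ is differentiable at $0$, on the premise that $v$ solves $\lambda^{-2}\le\det D^2v\le\lambda^2$ in the Alexandrov sense on $\Omega^*$. The lower bound is automatic, but the upper bound requires that the Monge--Amp\`ere measure of $v$ have no singular part, i.e.\ that $v$ be strictly convex on $\Omega^*$; that is exactly Caffarelli's conclusion when the \emph{target} is convex, and here the target of $\nabla v$ is the non-convex $\Omega$. So the hypothesis of the regularity theorem is what you are trying to establish, making this step circular. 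Second, for $0\in\partial\Omega^*$ you claim $|\Sigma|=0$ from $(\nabla u)_\sharp f=g$ and $g(\partial\Omega^*)=0$. The push-forward identity gives $|(\nabla u)^{-1}(\partial\Omega^*)\cap\Omega|=0$, but you already know $\Sigma\cap\Omega=\emptyset$; on $\mathbb R^2\setminus\overline\Omega$, where $\Sigma$ actually lives, $f\equiv 0$ and the identity says nothing, so $\Sigma$ being Lebesgue-null does not follow (and a priori the zero set of a convex extension like \eqref{extension} can be two-dimensional in the exterior). Third, the closing sentence about ``a kink of $u^*$ transverse to $N_{\Omega^*}(0)$ being incompatible with $\det D^2u^*\ge\lambda^{-2}$'' is not an argument that can be checked: you would need, at minimum, to say along which direction the kink is forced to lie (the tangent-cone argument only narrows it to the edge direction at a non-reflex boundary point) and to carry out the incompatibility quantitatively, without re-invoking the unavailable upper Monge--Amp\`ere bound. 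In short, the reflex-vertex case is a nice alternative to the paper's dual argument, but the other boundary cases are not established; the paper's route via $v$, the gradient-limit characterization of $\mathrm{ext}(\partial^-v(0))$, and the recession cone of $\partial^-u^*(0)$ is precisely designed to avoid the two unavailable ingredients your proposal leans on.
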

	\begin{proof}
		Since $u\geq 0$ and $u(0)=0,$ it follows that $0=\nabla u(0)\in \overline{\Omega}^*.$ Since $\Sigma=\partial^{-}u^*(0),$ it is equivalent to prove that $0$ is an extreme point of $\partial^{-} u^{*}(0).$ Note that $0\in \partial^{-}u^*(0).$ By subtracting a constant $u^*(0),$ we may assume that $u^*\geq 0.$
		
		By equation \eqref{global eq}, we have that for any small $r>0,$ $\nabla u(B_{r}(0))$ has positive Lebesgue measure. Hence, we can find a sequence $y_k\in \nabla u(B_{r}(0))$ such that $v$ is differentiable at $y_k$ with
		\begin{equation}\label{limit}
			y_k\in \Omega^*,\quad \nabla v(y_k)\rightarrow 0 \text{ and } y_k\rightarrow 0,
		\end{equation} 
		as $k\rightarrow \infty.$ It follows that $0\in \text{ext}(\partial^{-}v(0)).$ Similarly, we can deduce that
		\begin{equation}\label{a}
			\Sigma\cap \partial \Omega\subset \text{ext}(\partial^{-}v(0)),
		\end{equation}
		which implies that $\text{ext}(\partial^{-} u^{*}(0))\subset \text{ext}(\partial^{-}v(0))$ since $\text{ext} [\partial^{-}u^*(0)]=\text{ext} (\Sigma)\subset \partial \Omega.$
		
		Note that $u^*=v$ in $\Omega^*.$ If $0\in \Omega^*,$ we are done. So, in the following proof, we consider $0\in \partial \Omega^*.$ By \eqref{extension} and the definition of Legendre transform, it is easy to see that $u^*=\infty$ outside $\overline{\Omega}^*.$ Then $\partial^{-}u^*(0)$ is an unbounded, closed and convex set, which can be expressed as $\partial^{-}u^*(0) =\text{conv} (\text{ext}[\partial^{-}u^*(0)] + \text{rc}[\partial^{-}u^*(0)]).$ Note that Since $v\leq u^*$ and $v=u^*$ in $\overline{\Omega}^*,$ we can deduce that
		\begin{equation}\label{b}
			\partial^{-}v(0)\subset \partial^{-} u^{*}(0)=\Sigma.
		\end{equation}
		Since $\nabla v(y)\in \Omega$ for a.e. $ y\in \mathbb{R}^2,$ we can deduce that $\text{ext}(\partial^{-}v(0))\subset \overline{\Omega}.$ Combining this with \eqref{a} and \eqref{b}, it follows that
		\begin{equation}\label{c}
			\partial^{-}v(0)\cap \partial \Omega=\text{ext}(\partial^{-}v(0))=\Sigma\cap \partial \Omega.
		\end{equation}
		
		Assume by contradiction that assume $0\notin \text{ext}(\partial^{-}u^*(0)).$ Then we can express it as $0=p+r_0q$ for some $p\in \text{ext} [\partial^{-}u^*(0)],$ $q\in \text{rc}[\partial^{-}u^*(0)]$ and $r_0\in (0,\infty).$ This means that $u^*(y)\geq (p+rq)\cdot y$ for any $y\in \mathbb{R}^2$ and any sufficiently large $r>0.$ It follows that $q\cdot y\leq 0$ for $y\in \Omega^*.$ Additionally, we can rotate the supporting plane in the opposite direction $-q$ until the slope aligns with $p,$ namely, $u^*(y)\geq p\cdot y.$ Since $0=p+r_0q,$ it follows that $\Omega^*\subset \left\{y\in \mathbb{R}^2: p\cdot y\geq 0\right\}$ and $v(y)\geq p\cdot y$ in $\Omega^*.$ This leads a contradiction with \eqref{limit}. Indeed, we can find such $y_k\in \Omega^*$ that $\nabla v(y_k)\rightarrow 0$ as $k\rightarrow \infty.$
		
		Therefore, $0\in \text{ext}(\partial^{-}u^*(0))$ and $0\in \text{ext}(\Sigma).$
	\end{proof}
	
	Now, we are going to prove a localisation property for $x\in \overline{\Omega},$ which is crucial for the proof of the doubling property. 
	Observe that there is a constant $r_1>0,$ depending on the geometry of $\Omega,$ such that 
	\begin{equation}\label{r1}
		B_{3r_1}(b_i)\cap [b_jb_{j+1}]=\emptyset,\text{  for any }j\notin\left\{i-1,i\right\},
	\end{equation}
	for $i=1, \cdots, m.$
	Indeed, we only need to take 
	\begin{equation}\label{defbi}
		r_1:=\frac{1}{4}\min\left\{\dist(b_i,[b_jb_{j+1}]):b_i\notin [b_jb_{j+1}],1\leq i,j\leq m \right\}.
	\end{equation}
	By the choice of $r_1,$ we have that either $B_{r_1}(b_i)\cap\Omega$ is convex or $B_{r_1}(b_i)\setminus\Omega$ is convex; see Figure 3.1.
	
	\begin{figure}[htbp]
		\centering
		\includegraphics[width=4in,keepaspectratio]{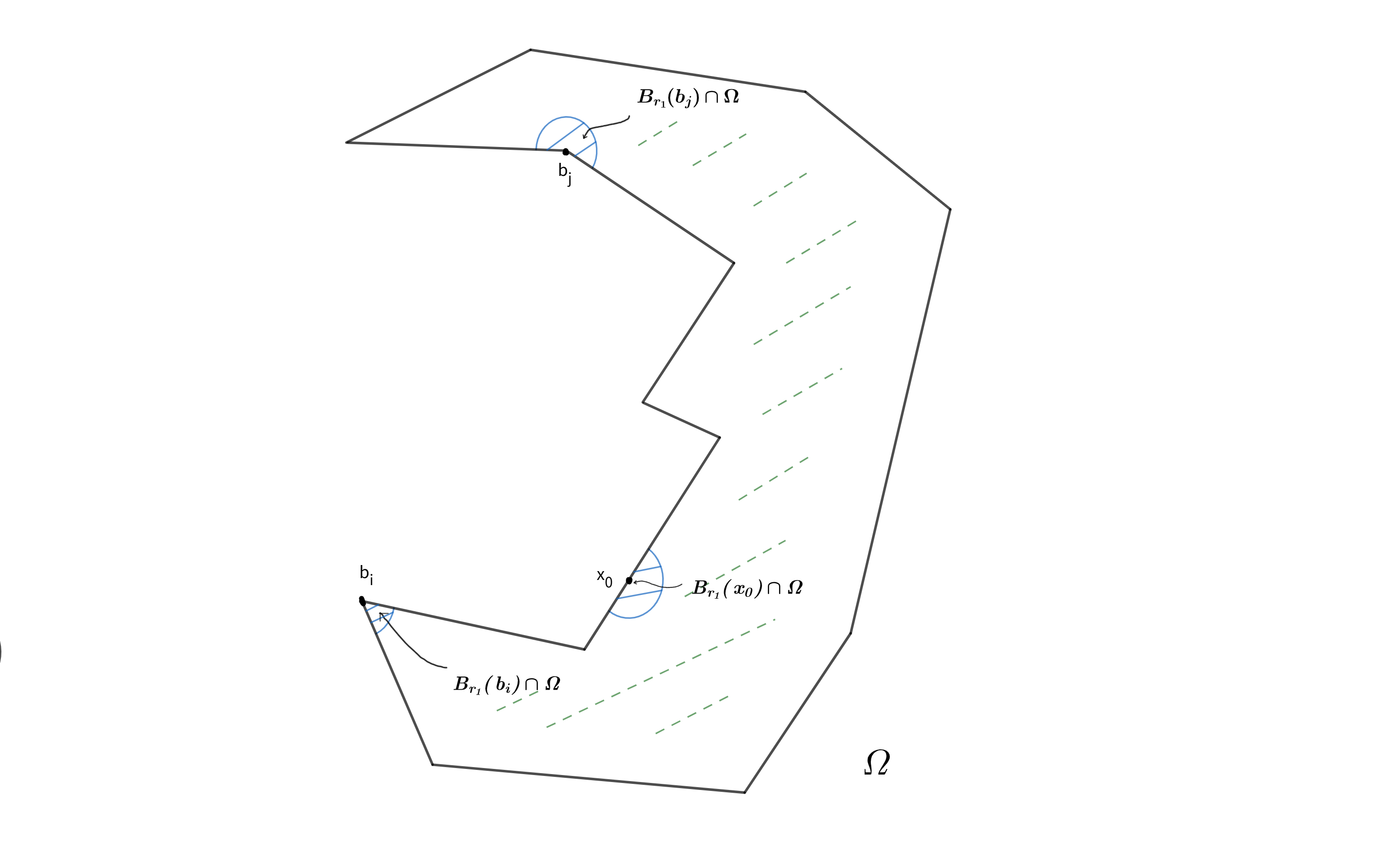}
		\caption {}
	\end{figure}
	
	\begin{lemma}\label{localisation1} For any given $r>0,$
		there exists $h_r>0$ such that for any $x\in \overline{\Omega},$  		
		$$S^c_h(x)\subset B_{r}(x),\text{ for }h\leq h_r.$$
	\end{lemma}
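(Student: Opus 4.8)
The plan is to argue by contradiction together with a compactness argument, the punchline being that a failure of localisation produces a convex limit function that is affine on a whole segment through the center, with the center in the relative interior of that segment, which is impossible. Concretely, suppose the conclusion fails for some fixed $r_0>0$: then there are points $x_k\in\overline{\Omega}$, heights $h_k\downarrow 0$, and points $z_k\in S^c_{h_k}(x_k)$ with $|z_k-x_k|\ge r_0$. By compactness of $\overline{\Omega}$ I may assume $x_k\to x_0\in\overline{\Omega}$. For each $k$ I translate coordinates so that $x_k=0$ and write $S^c_{h_k}(x_k)=\{w_k<h_k\}$, where $w_k(x):=u(x)-u(x_k)-\bar p_k\cdot(x-x_k)$ is convex with $w_k(0)=0$. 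By the balancing property of centred sections one also has $-\tfrac12(z_k-x_k)\in S^c_{h_k}(x_k)$, so $w_k<h_k$ on the entire segment joining $z_k-x_k$ to $-\tfrac12(z_k-x_k)$; this segment passes through the origin, has length $\ge\tfrac32 r_0$, and has the origin in its relative interior.

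Next I would extract limits. After a further subsequence, $z_k-x_k\to\zeta_\infty$ with $|\zeta_\infty|\ge r_0$, and $w_k\to w_\infty$ locally uniformly with $w_\infty$ convex; by stability of Monge--Amp\`ere measures and \eqref{global eq}, $\det D^2 w_\infty$ is bounded below by a positive constant on $\Omega-x_0$ and vanishes off $\overline{\Omega-x_0}$. Passing to the limit in $w_k<h_k$ along the segments above gives $w_\infty\le 0$ on $I:=[-\tfrac12\zeta_\infty,\zeta_\infty]$, while $w_\infty(0)=\lim w_k(0)=0$; since the origin lies in the relative interior of $I$, convexity forces $w_\infty\equiv 0$ on $I$. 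Moreover, because $u\in C^1$ and $x_k\to x_0$, the difference between $w_k$ and the tangent--plane normalisation $\bar u(\cdot+x_0)$, where $\bar u:=u-u(x_0)-\nabla u(x_0)\cdot(\cdot-x_0)\ge 0$, converges locally uniformly to a linear function; hence $w_\infty(\zeta)=\bar u(\zeta+x_0)+\mathbf a\cdot\zeta$ for some $\mathbf a\in\R^2$. Combining $w_\infty\equiv 0$ on $I$ with $\bar u\ge 0$ and $\bar u(x_0)=0$, the nonnegativity of $\bar u$ at the two endpoints of $I$ forces $\mathbf a$ to be orthogonal to $I$, and therefore $\bar u\equiv 0$ on the translated segment $x_0+I$.

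Thus the contact set $\Sigma=\{\bar u=0\}$ contains a nondegenerate segment with $x_0$ in its relative interior. If $x_0\in\Omega$, this contradicts the strict convexity of $u$ in $\Omega$, which forces $\Sigma\cap\Omega=\{x_0\}$; if $x_0\in\partial\Omega$, it contradicts Lemma \ref{extreme}, which asserts $x_0\in\text{ext}(\Sigma)$. Either way we obtain a contradiction, proving the lemma. The step I expect to be the main obstacle is the extraction of a nontrivial limit in the second paragraph: one must control the normalisations uniformly, i.e. know that the centred sections $S^c_h(x)$ with $x\in\overline{\Omega}$, $h\le 1$, stay in a fixed bounded set and that the centring slopes $\bar p_k$ (equivalently the depths of the normalised sections) stay bounded, so that $w_k$ has a genuine locally uniform limit rather than degenerating or escaping to infinity. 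I would handle this using the balancing property and John's lemma together with the a priori inclusion $\nabla u(\R^2)\subseteq\overline{\Omega^*}$; when a section degenerates anisotropically one rescales its long direction to unit scale before passing to the limit, in which case the limit set is exactly the segment $I$ (or, if diameters blow up, a full line), and the contradiction with strict convexity or with Lemma \ref{extreme} persists. This is precisely the step where the geometry of $\Omega$ — and, near a concave vertex, the extreme--point property of Lemma \ref{extreme} — is used, which is why the threshold $h_r$ is not universal.
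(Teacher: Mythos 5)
Your proposal is correct and takes essentially the same route as the paper: a compactness/contradiction argument producing a limit in which $u$ coincides with an affine function on a nondegenerate segment having the limit point in its relative interior, which contradicts Lemma \ref{extreme} on the boundary (and strict convexity in the interior). The only cosmetic differences are that the paper deduces $u\ge L_0$ globally from Remark \ref{center bound} instead of your relative-interior/tangent-plane step, and the normalisation issue you flag as the main obstacle is not really one: since $u$ is globally Lipschitz and the centring slopes $\bar p_k$ are uniformly bounded (e.g.\ they must lie in $\overline{\Omega^*}$ for the centred section to have a centre of mass), the functions $w_k$ converge locally uniformly by Arzel\`a--Ascoli without any rescaling.
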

	\begin{proof}
		Suppose, for the sake of contradiction, that there exists $i\in\left\{1,\cdots,m\right\},$ $h_k\leq \frac{1}{k}$ and a sequence $x_k\in\overline{\Omega}$ such that
		$I_k\subset S^c_{h_k}(x_k)=\{u<L_k\}$ for some segment $I_k$ centered at $x_k$ and $|I_k|\geq c_1$ for some constant $c_1>0$ independent of $k,$ where $L_k$ is an affine function
		with $\|\nabla L_k\|\leq \|u\|_{L^\infty(\mathbb{R}^2)}.$ Up to a subsequence, we may assume 
		$x_k\rightarrow x_0\in \overline{\Omega},$ $L_k\rightarrow L_0, I_k\rightarrow I_0$ with $|I_0|\geq c_1.$ Note that $I_0$ is centered at $x_0.$
		
		By Remark \ref{center bound} we have that $u-L_k\geq -Ch_k$ in $\mathbb{R}^2.$ Passing to limit, we have $u\geq L_0$ in $\mathbb{R}^2.$
		Since $0\leq L_k-u\leq h_k$ in $I_k,$  it follows that $u=L_0$ on $I_0.$  Hence, $x_0$ is not an extreme point of $\left\{u=L_\infty\right\}.$ This contradicts the Lemma \ref{extreme}.
	\end{proof}

	Recall that $V_1$ denotes the set of concave vertices of $\Omega.$
	Since $\Omega$ is a polygonal domain, by Lemma \ref{localisation1}, there exists a small postive constant $h_0<h_{r_1},$ such that
	$S_h^c(x)\cap \Omega$ is convex for any $x\in \overline{\Omega}\setminus\left(\cup_{z\in V_1}B_{r_1}(z)\right)$ and any $h\leq h_0.$

	\begin{lemma}\label{dlem1}
		Let $h_0,r_1$ be given as above. For any $x\in \overline{\Omega},$ we have 
		$$|\nabla u(S^c_h(x))|\leq C|\nabla u(\frac{1}{2}(S^c_h(x)))|$$
		for some universal constant $C>0$ and for $h<h_0.$
	\end{lemma}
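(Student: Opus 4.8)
The plan is to split into two cases according to whether the section $S^c_h(x)$ "sees" a concave vertex. \emph{Case 1: $x\in\overline{\Omega}\setminus\left(\cup_{z\in V_1}B_{r_1}(z)\right)$.} By the choice of $h_0$ discussed just before the lemma, $S^c_h(x)\cap\Omega$ is convex for all $h<h_0$. In this situation the section is, up to the domain, convex, and one can mimic Caffarelli's classical argument: the Monge-Amp\`ere measure of $u$ on $S^c_h(x)$ is comparable to $h\cdot |S^c_h(x)|$ from below and above (using \eqref{global eq} together with the fact, from Remark \ref{center bound} and John's Lemma, that $u-\ell$ oscillates like $h$ on a section while $S^c_h(x)$ contains and is contained in ellipsoids comparable to its John ellipsoid). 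Since $S^c_h(x)$ is balanced with respect to $x$, $\tfrac12 S^c_h(x)$ has measure $\sim |S^c_h(x)|$, and a sub-level set computation inside the John ellipsoid shows $|\nabla u(\tfrac12 S^c_h(x))|\gtrsim h\cdot|S^c_h(x)|\gtrsim |\nabla u(S^c_h(x))|$. The only point requiring care is that $u$ is only a \emph{sub}-solution outside $\Omega$, but since $\Sigma\cap\Omega=\emptyset$ and the relevant mass lives where $\det D^2u\sim 1$, the lower bound still goes through on the portion $S^c_h(x)\cap\Omega$, which carries all the mass.

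\emph{Case 2: $x\in B_{r_1}(z)$ for some concave vertex $z\in V_1$.} After translating so that $z=0$, subtracting a supporting affine function, and using Lemma \ref{localisation1} with $r=r_1$, we have $S^c_h(x)\subset B_{r_1}(0)$ for $h$ small, so only the single edge-pair meeting at $z$ is relevant and $B_{r_1}(0)\setminus\Omega$ is convex (a wedge). The key geometric fact is that, because $0$ is a concave vertex, the interior angle exceeds $\pi$, so the complement wedge $W:=B_{r_1}(0)\setminus\Omega$ is a convex cone of opening angle $<\pi$; hence for \emph{any} convex set $K$ balanced about a point $x\in B_{r_1}(0)$ and contained in $B_{r_1}(0)$, a definite fraction of $|K|$ lies in $\Omega$, i.e. $|K\cap\Omega|\geq c|K|$ for a constant $c$ depending only on the wedge angle (hence on the geometry of $\Omega$). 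Applying this to $K=S^c_h(x)$ and to $K=\tfrac12 S^c_h(x)$ (both balanced about $x$), and again invoking that the oscillation of $u$ across a section is $\sim h$, we get
\[
|\nabla u(S^c_h(x))|\;\lesssim\; h\,|S^c_h(x)|\;\lesssim\; h\,|S^c_h(x)\cap\Omega|,
\qquad
|\nabla u(\tfrac12 S^c_h(x))|\;\gtrsim\; h\,|\tfrac12 S^c_h(x)\cap\Omega|\;\gtrsim\; h\,|S^c_h(x)|,
\]
where the lower bound on $|\nabla u(\cdot)|$ uses that $\det D^2u\geq 1/C$ on the $\Omega$-part of the section, together with a John-ellipsoid comparison to pass from the value of the oscillation to a genuine lower bound on the image of the gradient. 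Chaining these inequalities yields the claim with a universal $C$ multiplied by a factor depending only on the wedge angle; since there are finitely many vertices, the worst such factor is absorbed into the geometry-dependent thresholds and the conclusion follows for all $x\in\overline{\Omega}$ and $h<h_0$.

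The main obstacle I anticipate is the lower bound $|\nabla u(\tfrac12 S^c_h(x))|\gtrsim h\,|S^c_h(x)|$ in Case 2: one must convert the fact that $u$ rises by $\sim h$ from the center of the section to its boundary into a genuine lower bound on the Lebesgue measure of $\nabla u(\tfrac12 S^c_h(x))$, despite the section being non-convex. The clean way is to work with the convex function $u$ restricted to the convex set $S^c_h(x)\cap\Omega$: build a cone-like convex minorant supported at $x$ that agrees with $u$'s oscillation and whose gradient image at the half-section level has measure $\gtrsim h\,|S^c_h(x)\cap\Omega| / \operatorname{diam}(S^c_h(x))^0$—more precisely, use John's Lemma to write $E\subset S^c_h(x)\subset C_dE$ and reduce by affine normalization to the case where $E$ is a ball of radius $\rho$ with $\rho^2\sim |S^c_h(x)|$; then the standard estimate $|\nabla u(\tfrac12 E)|\gtrsim h$ (in normalized coordinates) transfers back, after undoing the normalization, to the desired bound. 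Handling the normalization bookkeeping so that no constant secretly depends on $h$ or on $\operatorname{diam}(\Omega)$ is the delicate part, but it is routine once the geometric fraction estimate $|K\cap\Omega|\geq c|K|$ is in hand.
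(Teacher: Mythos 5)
Your two-case split (away from concave vertices versus inside $B_{r_1}(z)$ for a concave vertex $z$) is exactly the decomposition the paper uses, and your Case~2 rests on the right geometric fact: since $B_{r_1}(z)\setminus\Omega$ is a convex wedge that is always contained in a closed half-plane through $x$, and since centred sections are balanced with respect to their centre, one gets $|K\cap\Omega|\geq c\,|K|$ for every section $K=S^c_h(x)$ or $K=\tfrac12 S^c_h(x)$ with $x\in\overline\Omega\cap B_{r_1}(z)$ and $K\subset B_{r_1}(z)$. This is the paper's estimate~\eqref{p2}. (One small improvement: the constant $c$ here is universal via the balancing constant of Caffarelli; it does not need to depend on the wedge angle as you suggest.)

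There is, however, a recurring dimensional error that you should fix. You repeatedly write $|\nabla u(S^c_h(x))|\approx h\,|S^c_h(x)|$ and the analogous claim for $\tfrac12 S^c_h(x)$. The relation supplied directly by \eqref{global eq} is
\[
|\nabla u(S^c_h(x))|=\int_{S^c_h(x)}\det D^2 u\;\approx\;|S^c_h(x)\cap\Omega|,
\]
with no factor of $h$ at all; the height enters only through Corollary~\ref{prop}(a), which reads $|S^c_h(x)|\cdot|S^c_h(x)\cap\Omega|\approx h^2$ (so in particular $|\nabla u(S^c_h(x))|\approx h^2/|S^c_h(x)|$, not $h\,|S^c_h(x)|$). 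In your chains the spurious factor of $h$ happens to appear on both sides and cancels, so the final inequality survives, but the intermediate statements are false and they hide the much shorter reduction: one only needs $|S^c_h(x)\cap\Omega|\lesssim|\tfrac12 S^c_h(x)\cap\Omega|$, after which the mass identity above applied to both sets finishes the proof.

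Your Case~1 is also more roundabout than it needs to be and, as written, is not a complete argument. Rather than normalising by John's lemma and invoking an Aleksandrov sublevel-set estimate (which in any case requires care because $\det D^2 u=0$ outside $\Omega$, so the cone comparison has to be run on $S^c_h(x)\cap\Omega$ and not on the full section), the paper simply observes that when $S^c_h(x)\cap\Omega$ is convex and $x\in\overline{S^c_h(x)\cap\Omega}$ (true because $x$ is the centre of mass of the section and $x\in\overline\Omega$), averaging any $y\in S^c_h(x)\cap\Omega$ with $x$ gives the elementary inclusion
\[
S^c_h(x)\cap\Omega\;\subset\;2\bigl(\tfrac12 S^c_h(x)\cap\Omega\bigr),
\]
hence $|S^c_h(x)\cap\Omega|\leq 4\,|\tfrac12 S^c_h(x)\cap\Omega|$ immediately. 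I would replace your John-ellipsoid/Aleksandrov sketch with this one-line inclusion, and in both cases drop the $h$-scaling in favour of the direct identity $|\nabla u(S)|\approx|S\cap\Omega|$.
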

	\begin{proof}
		By the above discussion, if $x\in \overline{\Omega}\setminus\left(\cup_{z\in V_1}B_{r_1}(z)\right),$ we have that $S_h^c(x)\cap \Omega$ is convex, it implies
		\begin{equation}\label{p3}
			S^c_h(x)\cap \Omega\subset 2(\frac{1}{2}S^c_h(x)\cap \Omega)
		\end{equation}
		for $h<h_0.$
		Then the doubling estimate follows from \eqref{p3} and \eqref{global eq}.
		If $x\in B_{r_1}(b_i)$ for some $b_i\in V_1,$ then since $B_{2r_1}\setminus\Omega$ is convex and $S_h^c(x)\subset  B_{2r_1}(b_i)$ for $h\leq h_0,$ it implies that 
		\begin{equation}\label{p2}
			|S^c_h(x)\cap \Omega|\approx |S^c_h(x)|\approx|\frac{1}{2}S^c_h(x)\cap \Omega|
		\end{equation}
		for $h<h_0.$ 
		Then the doubling estimate follows from \eqref{p2} and \eqref{global eq}.
	\end{proof}

	Once we have the doubling property, by the same proof of \cite[Lemma 2.4 and Corollary 2.1]{C96} we have the following properties.
	\begin{corollary}\label{prop}
		For any centered section $S_h^c(x)$ with $x\in \overline{\Omega}$ and $h\leq h_0$ defined by an affine function $L$,
		\begin{itemize}
			\item[(a)] $|S_h^c(x)||S_h^c(x)\cap\Omega|\approx h^2.$
			
			\item[(b)] $\sup_{S_h^c(x)}|u-L|\approx (L-u)(x)=h.$
			
			\item[(c)] if $x+z\in \partial S_h^c(x),$ then $(L-u)(x+tz)\leq C(1-t)^{1/2}h$ for $t\in [0,1].$
			
		\end{itemize}
	\end{corollary}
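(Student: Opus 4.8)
The plan is to mimic the classical arguments of Caffarelli \cite{C96}, now made available because Lemma \ref{dlem1} supplies the doubling property of the Monge-Amp\`ere measure $\mu := \det D^2 u$ on every centered section of height $h < h_0$. The three items are exactly the conclusions of \cite[Lemma 2.4 and Corollary 2.1]{C96}, adapted to our setting where the relevant mass is concentrated on $\Omega$, so I would simply check that each step in that proof goes through verbatim once doubling is known, paying attention only to the places where convexity of the domain was used in the original.

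First I would prove (a). Normalize the section: after an affine change of coordinates, John's lemma (Lemma \ref{John'sLemma}) gives $B_1 \subset T(S_h^c(x)) \subset B_{C_d}$ for an affine $T$ with $|\det T| \cdot |S_h^c(x)| \sim 1$. Under $T$ the function $u$ is replaced by a convex function whose Monge-Amp\`ere mass over $\Omega$ is comparable to $\mu(S_h^c(x)\cap\Omega)$, and by \eqref{global eq} this mass is $\sim |S_h^c(x)\cap\Omega|$. On the normalized section the oscillation of $u - L$ is $h$; combining the Alexandrov maximum principle (lower bound for $h$ in terms of the Monge-Amp\`ere mass of the normalized section) with the comparison of $u-L$ against a paraboloid from above (using $\det D^2 u \le C\chi_\Omega$) yields $h^2 \sim |\det T|^{-2}\,\mu(\text{normalized section}) \sim |S_h^c(x)|^2\,|S_h^c(x)\cap\Omega|\cdot(\text{normalizing factors})$; tracking the Jacobians gives $|S_h^c(x)|\,|S_h^c(x)\cap\Omega| \sim h^2$. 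The doubling property is what guarantees that the Monge-Amp\`ere mass does not degenerate when we pass from $S_h^c$ to $\tfrac12 S_h^c$, which is the only new ingredient relative to the interior case.

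For (b), the inequality $\sup_{S_h^c(x)}|u-L| \ge (L-u)(x) = h$ is immediate from the definition \eqref{sect} (the height is achieved, by construction, as the gap at the center, up to the standard normalization $\bar p$ making $x$ the barycenter). The reverse bound $\sup_{S_h^c(x)}(L-u) \le Ch$ is Remark \ref{center bound} after the affine normalization that puts $x$ at the origin with $u(x)=L(x)=0$ and $u \ge L$; this uses only that $S_h^c(x)$ is balanced with respect to $x$ (the cited \cite[Lemma 2.1]{C96}), not any property of $\Omega$, so it transfers directly. For (c), fix $x+z \in \partial S_h^c(x)$ and consider $\phi(t) := (L-u)(x+tz)$, a concave function on $[0,1]$ with $\phi(0)=h$, $\phi(1)=0$. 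Concavity alone gives the linear bound $\phi(t) \le (1-t)h$ for $t \ge 0$, but the sharper exponent $1/2$ comes from comparing $u$ on the segment with the boundary behavior of the section: using part (a) together with the doubling property to control $|S_{(1-t)h}^c(\cdot)|$ and the fact that the point $x + tz$ lies at the ``correct'' scale inside the smaller section, one gets $\phi(1-s) = (L-u)(\text{point near }\partial S^c) \le C s^{1/2} h$; reindexing yields the claim. This is precisely the engulfing-type estimate whose proof in \cite{C96} is driven by the doubling property.

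The main obstacle is not any single computation but making sure that the non-convexity of $\Omega$ never enters: in \cite{C96} one occasionally uses that $\Omega$ itself is a section of $u$ (or that $u$ is globally strictly convex with sections exhausting $\Omega$), whereas here $\Omega$ is only a polygon and strict convexity of $u$ holds merely inside $\Omega$. The remedy is that every statement in Corollary \ref{prop} is restricted to $h \le h_0$, a regime where Lemma \ref{localisation1} confines $S_h^c(x)$ to a small ball and, near concave vertices, Lemma \ref{dlem1} shows $|S_h^c(x)\cap\Omega| \sim |S_h^c(x)|$; away from concave vertices $S_h^c(x)\cap\Omega$ is genuinely convex. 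In both cases the Monge-Amp\`ere mass satisfies the two-sided bound $\mu(S_h^c(x)) \sim |S_h^c(x)\cap\Omega|$, which is all that the Caffarelli arguments actually require. So I would state explicitly at the outset that ``by Lemma \ref{dlem1} and \eqref{global eq} the measure $\mu$ restricted to sections of height $< h_0$ is doubling and comparable to Lebesgue measure on $S_h^c\cap\Omega$'', and then invoke \cite[Lemma 2.4 and Corollary 2.1]{C96} with only the cosmetic changes above.
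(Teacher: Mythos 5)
Your approach matches the paper's exactly: the paper proves Corollary \ref{prop} with the single observation that once Lemma \ref{dlem1} supplies the doubling property, the arguments of \cite[Lemma 2.4 and Corollary 2.1]{C96} carry over verbatim, and your elaboration---two-sided comparability of the Monge--Amp\`ere mass with $|S_h^c\cap\Omega|$ via \eqref{global eq}, localization from Lemma \ref{localisation1}, and the check that non-convexity of $\Omega$ is harmless once $h\le h_0$---is precisely the justification the paper leaves implicit in that citation. One small slip in your sketch of (c) is worth correcting: for the concave function $\phi(t)=(L-u)(x+tz)$ with $\phi(0)=h$ and $\phi(1)=0$, concavity gives the \emph{lower} bound $\phi(t)\ge(1-t)h$ on $[0,1]$, not the upper bound $\phi(t)\le(1-t)h$ you assert; the desired estimate $\phi(t)\le C(1-t)^{1/2}h$ is a weaker upper bound near $t=1$, compatible with that concavity lower bound, and is not a ``sharpening'' of a trivial linear estimate but a genuine consequence of the equation via doubling, as you correctly ascribe to \cite{C96}---so the overall argument is unaffected.
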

		

	\section{Geometric properties of sections near concave vertices}
	
	Let $h_0, r_1$ be as in Lemma \ref{dlem1}. For any $x_0\in \Omega_1:=\overline{\Omega}\setminus\left(\cup_{z\in V_1}B_{r_1}(z)\right),$ by the choice of $h_0$ we have that 
	$S_h^c(x_0)\cap\Omega$ is a convex set for any $h\in (0, h_0).$ 
	By \eqref{global eq}, we have that 
	\begin{equation}\label{loceq}
		\det D^2u=g\chi_{_{S_h^c(x_0)\cap\Omega}}
	\end{equation}
	in 
	$S_h^c(x_0)$ for some function $g$ satisfying $1/C\leq g\leq C$ in $S_h^c(x_0).$
	By \eqref{loceq}, since $S_h^c(x_0)\cap\Omega$ is convex the sections are well localised
	(see Lemma \ref{localisation1}),  the proof in \cite{SY2} applies in our case and we have that
	$\|u\|_{W^{2,1+\epsilon}(\frac{1}{2}S_h^c(x_0)\cap\Omega)}\leq C.$ Then, by a standard covering argument we have
	\begin{equation}
		\|u\|_{W^{2,1+\epsilon}(\Omega_1)}\leq C,
	\end{equation}
	for some positive constant $C$ depending on $\lambda$, the geometry of $\Omega$, and the inner and outer radii of $\Omega^*$.
	Hence to prove Theorem \ref{t1} we only need to show that
	$\|u\|_{W^{2,1+\epsilon}(B_{r_1}(b_i)\cap \Omega)}\leq C$ for $b_i\in V_1,$ namely $b_i$ is a concave vertex of $\Omega.$

	Now, we will establish a geometric decay property of sections near the concave vertices.
	In the following we denote $[S^*,\bar{u}]$ as a renormalization pair of $[S_h^c(x), u].$
	where $B_1(0)\subset S^*:=T(S_h^c(x))\subset B_{C_d}(0)$ is centered at $0,$  $T$ is an symmetric affine transformation, and $\bar{u}(x):=\frac{1}{h}(u-L)(T^{-1}x).$
	\begin{lemma}[Geometric Decay]\label{geodecay}
		Let $x_1,x_2$ be two points in $B_{r_1}(b_i)\cap \overline{\Omega},$ and let $0\leq t<\bar{t}\leq 1.$ There exists an $s_0(t,\bar{t})\in (0,1)$ such that $x_2\in t S^c_h(x_1)\cap \overline{\Omega}$ implies $S^c_{sh}(x_2)\subset \bar{t} S^c_h(x_1)$ for all $s\leq s_0.$
	\end{lemma}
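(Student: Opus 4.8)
\textbf{Proof proposal for Lemma \ref{geodecay} (Geometric Decay).}

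The plan is to argue by contradiction and compactness, exactly in the spirit of the localisation Lemma \ref{localisation1} but now quantified at the level of nested sections. Fix $t<\bar t$ in $[0,1]$. Suppose the statement fails: then there are sequences $s_k\to 0$, points $x_1^k,x_2^k\in B_{r_1}(b_i)\cap\overline\Omega$ with $x_2^k\in tS^c_h(x_1^k)\cap\overline\Omega$, and yet $S^c_{s_kh}(x_2^k)\not\subset\bar t\,S^c_h(x_1^k)$. Pass to subsequences so that $x_1^k\to x_1^\infty$, $x_2^k\to x_2^\infty$ in $B_{r_1}(b_i)\cap\overline\Omega$, and so that the affine functions $L_1^k$ defining $S^c_h(x_1^k)$ converge to some affine $L_1^\infty$ (this is legitimate because $\|\nabla L_1^k\|\le\|u\|_{L^\infty}$ and the centre-of-mass normalisation pins down the constant term, using Remark \ref{center bound} and balancedness). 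Since $u\in C^1(\mathbb{R}^2)$ and is strictly convex in $\Omega$, the limiting section $S^c_h(x_1^\infty)=\{u<L_1^\infty\}$ is a genuine bounded convex body, and by Corollary \ref{prop}(b) we have $(L_1^\infty-u)(x_1^\infty)=h$ with the section well localised inside $B_{2r_1}(b_i)$ by Lemma \ref{localisation1}.

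The next step is to extract a violation of strict convexity at the limit. For each $k$, pick $z_k\in S^c_{s_kh}(x_2^k)\setminus\bar t\,S^c_h(x_1^k)$; up to a subsequence $z_k\to z_\infty$. On one hand $z_k\notin \bar t\,S^c_h(x_1^k)$ forces $z_\infty\in \overline{\mathbb{R}^2\setminus \bar t\,S^c_h(x_1^\infty)}$, so in particular $z_\infty\notin \bar t\,S^c_h(x_1^\infty)$ or lies on its boundary; combined with $x_2^\infty\in \overline{tS^c_h(x_1^\infty)}$ and $t<\bar t$, the segment $[x_2^\infty,z_\infty]$ must cross the "annular" region $\bar t\,S^c_h(x_1^\infty)\setminus tS^c_h(x_1^\infty)$, which by Corollary \ref{prop}(c) (applied along rays from $x_1^\infty$) is a region where $L_1^\infty-u$ is bounded below by a positive constant $\delta=\delta(t,\bar t)h$ and above by $h$. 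On the other hand, $z_k\in S^c_{s_kh}(x_2^k)$ together with Remark \ref{center bound} (after the usual normalisation $u(x_2^k)=0$, $u\ge0$) gives $u(z_k)-u(x_2^k)=o(1)$ and indeed $u$ is $o(1)$-close to the affine function $L_2^k$ on the whole segment $[x_2^k,z_k]$; passing to the limit, $u$ agrees with an affine function on the nondegenerate segment $[x_2^\infty,z_\infty]$. But that segment meets $\Omega$ (it enters $S^c_h(x_1^\infty)\cap\Omega$, which has positive measure by Corollary \ref{prop}(a)) and $u$ is strictly convex in $\Omega$ — contradiction. The one case needing care is $z_\infty=x_2^\infty$, i.e.\ the violation shrinks to a point; this is ruled out because $s_kh$-sections centred at $x_2^k$ contain a ball of definite radius comparable to $\sqrt{s_kh}$ after renormalisation — using the renormalization pair $[S^*,\bar u]$ and John's Lemma \ref{John'sLemma} — so $|z_k-x_2^k|$ is bounded below along the part of $\partial S^c_{s_kh}(x_2^k)$ pointing away from $\bar t\,S^c_h(x_1^k)$, and one selects $z_k$ there; hence $z_\infty\ne x_2^\infty$.

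To make $s_0(t,\bar t)$ uniform over all admissible $x_1,x_2$ (not just along one sequence), note the contradiction argument above only used: (i) the uniform bound $\|\nabla L\|\le\|u\|_{L^\infty}$; (ii) the uniform localisation from Lemma \ref{localisation1}; (iii) the uniform constants in Corollary \ref{prop} and Lemma \ref{extreme}, all of which depend only on $\lambda$ and the geometry of $\Omega$. So the negation of the lemma for a \emph{fixed} pair $(t,\bar t)$ but varying $x_1,x_2,s_k$ already yields a contradiction, which is exactly the statement that a single $s_0(t,\bar t)$ works. The main obstacle I expect is the bookkeeping in the second paragraph: showing that the ``bad'' point $z_k$ can be chosen so that in the limit it produces a \emph{nondegenerate} segment on which $u$ is affine and which genuinely intersects $\Omega$ (rather than running along $\partial\Omega$, where strict convexity of the extended $u$ may fail); this is where the balancedness of centred sections, the lower bound on $|S^c_{sh}(x_2)\cap\Omega|$ from Corollary \ref{prop}(a), and the geometry of the concave vertex $b_i$ (via the decomposition of $B_{r_1}(b_i)$ alluded to in the introduction) all have to be combined carefully.
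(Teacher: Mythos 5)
There is a genuine gap, and it sits exactly at the point you flag at the end as ``bookkeeping'': the final contradiction is not established. After extracting the limit you claim that $u$ is affine on a nondegenerate segment $[x_2^\infty,z_\infty]$ and that this segment ``meets $\Omega$'' because $S^c_h(x_1^\infty)\cap\Omega$ has positive measure; that inference is invalid — positive measure of the intersection says nothing about the particular segment, and near a concave vertex the degenerate segment can (and in the worst case does) run along an edge of $\partial\Omega$ or through $S^c_h\setminus\overline{\Omega}$, where the extended $u$ is not strictly convex. This is precisely the hard case, and it cannot be dismissed: the paper's proof shows, using the convexity of $S_\infty\setminus\overline{\Omega_\infty}$ together with the balancedness of centred sections (which yields a point $\tilde{x}_\infty$ of the contact set on the opposite side of $x^*_\infty$ from $x_\infty$), that the contact segment must lie on a \emph{straight} portion of $\partial\Omega_\infty$ with a half-ball $B_r^+(x^*_\infty)\subset\Omega_\infty$ on one side, and then derives the contradiction not from strict convexity but from the nonexistence result of Mooney--Rakshit \cite[Lemma 6.1]{CR}: there is no convex function on a planar ball with $\det D^2 u\gtrsim\chi_{\{(x-x^*_\infty)\cdot e^+>0\}}$ that is linear on the diameter $\{(x-x^*_\infty)\cdot e^+=0\}$. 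Your proposal contains no substitute for this ingredient, so the contradiction never closes.

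A second, related defect is uniformity in $h$. The constant $s_0$ in the lemma depends only on $(t,\bar t)$, and in the later covering and engulfing arguments it is applied to sections of arbitrarily small height, so it must be uniform in $h$ (and in the data). Your compactness argument keeps $h$ fixed and works with the original $u$, so at best it produces an $s_0$ depending on $h$ (and on the particular solution); as $h\to 0$ the sections $S^c_h(x_1)$ degenerate and no limit section survives. This is why the paper runs the compactness on renormalized pairs $[S_k^*,\bar u_k]$ (John normalization to unit-ball scale, Blaschke selection, with heights $h_k$ allowed to vary and the renormalized small sections having height $1/k\to 0$), so that the limit problem is a nondegenerate Monge--Amp\`ere problem with $\det D^2u_\infty\approx\chi_{S_\infty\cap\Omega_\infty}$ and $S_\infty\setminus\overline{\Omega_\infty}$ convex. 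To repair your argument you would need both to renormalize before passing to the limit and to invoke (or reprove) the half-ball nonexistence lemma in place of interior strict convexity.
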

	\begin{proof}
		We will prove that given $t<1$ and $x_2\in t S^c_h(x_1)\cap \overline{\Omega},$ there exist $s_0(t)$ and $\bar{t}<1$ such that
		$$
		S^c_{s h}\left(x_2\right) \subset \bar{t} S^c_h\left(x_1\right),
		$$
		for $s\leq s_0.$
		The lemma will then follow for a general $\bar{t}>t$ by a finite iteration.
		
		Assume for contradiction that the lemma fails. Then for a fixed $t$, we have a sequence of solutions $u_k$, domains $\Omega_k$, pairs of points $x_1^k, x_2^k$ in $\overline{\Omega_k}$, and sections $
		S^c_{h_k}\left(x_1^k\right), S^c_{\frac{1}{k} h_k}\left(x_2^k\right)
		$ such that $x_2^k \in t S^c_{h_k}\left(x_1^k\right)$, but $S^c_{\frac{h_k}{k}}\left(x_2^k\right)$ is not contained in $(1-1 / k) S^c_{h_k}\left(x_1^k\right)$. 
		We renormalize $\left[S^c_{h_k}\left(x_1^k\right), u\right]$ to $\left[S_k^*, \bar{u}_k\right]$ with $S_k^*$ equivalent to the unit ball. It is straightforward to compute that 
		$$\det D^2 \bar{u}_k=\frac{1}{|S_k^*\cap T_k(\Omega_k)|}\tilde{f}\chi_{S_k^*\cap T_k(\Omega_k)} \quad \text{ in }S^*_k,$$ where $\frac{1}{\lambda}\leq \tilde{f}\leq \lambda$ and $T_k$ is the renormalization. Note that $S_k^*\setminus \overline{T_k(\Omega_k)}$ is convex and $|S_k^*\cap T_k(\Omega_k)|\approx 1.$
		By Blaschke's selection theorem and Corollary \ref{prop}, up to a subsequence we may assume
		$\bar u_k\rightarrow u_\infty, S^*_k\rightarrow S_\infty, T_k(\Omega)\rightarrow \Omega_\infty$ as $k\rightarrow \infty.$ Moreover, $$\det D^2 u_{\infty}\approx \chi_{S_{\infty}\cap \Omega_{\infty}}$$ in the Alexandrov sense. Note that $S_\infty\setminus\overline{\Omega_\infty}$ is convex.

		Denote the new sections of $S_{\frac{1}{k} h_k}\left(x_2^k\right)$ after these renormalizations by $S^*_{\frac{1}{k}}(x^*_k)$ with new centers at $x^*_k,$ whose height tends to zero as $k\rightarrow \infty.$ Therefore, in the limit, we obtain that $u_{\infty}$ coincides with a 
		supporting affine function $L_{\infty}$ in a convex set $K.$ Since $S^*_{\frac{1}{k}}(x^*_k)$ extends beyond $(1-1/k)S_k^*$ from $x^*_k\in tS_k^*,$ $K$ must contain a segment from $x^*_{\infty}\in t S_{\infty}$ to $x_{\infty}\in \partial S_{\infty},$ where $x^*_{\infty}$ and $x_{\infty}$ are subsequential limits of the $x^*_k$ and points $x_k\in S^*_{\frac{1}{k}}(x^*_k)\cap \partial \left((1-1/k)S_k^*\right),$ respectively. Since $S^*_{\frac{1}{k}}(x^*_k)$ are balanced around $x^*_k,$ there exists $\tilde{x}_{\infty}\in K$ such that $x^*_{\infty}=ax_{\infty}+(1-a)\tilde{x}_{\infty}$ for  some constant $0<a<1.$
		
		Since
		\begin{equation}\label{uinfeq}
			\det D^2 u_{\infty}\gtrsim \chi_{S_{\infty}\cap \Omega_{\infty}}\quad \text{ in }S_{\infty}\subset \mathbb{R}^2,
		\end{equation} 
		it follows that $u_{\infty}$ is strictly convex inside $S_{\infty}\cap \Omega_{\infty}.$
		Hence $(\tilde{x}_{\infty}x_{\infty})\cap \Omega_\infty=\emptyset.$
		
		We can select $p_1,p_2\in [x_{\infty}\tilde{x}_{\infty}]$ such that $|p_1x^*_{\infty}|=|p_2x^*_{\infty}|=r$ and $B_r(x^*_{\infty})\subset S_{\infty}$ for some constant $r$ only depending on $t.$ Note that $S_{\infty}\setminus \overline{\Omega_{\infty}}$ is convex. Combining this with the fact that $x^*_{\infty}\in \overline{\Omega_{\infty}}$ and $[p_1p_2]\cap \Omega_{\infty}=\emptyset,$ we can deduce that $x^*_{\infty}\in \partial \Omega_{\infty}$ and $B_r(x^*_{\infty})\cap \Omega_{\infty}$ is a semiball, denoted by $B^+_r(x^*_{\infty})$ with its straight boundary $[p_1p_2]\subset \partial \Omega_{\infty}.$ 
		Let $e^+$ be a unit vector orthogonal to $[p_1p_2]$ and point into the interior of  $\Omega_{\infty}.$
		Now, $u_{\infty}$  satisfies
		\begin{equation}\label{nonexieq}
			\det D^2 u_{\infty}\gtrsim \chi_{\left\{(x-x^*_{\infty})\cdot e^+>0\right\}} ,\qquad u_{\infty}|_{\left\{(x-x^*_{\infty})\cdot e^+=0\right\}}\text{ linear},
		\end{equation}
		in $B_r(x^*_{\infty}).$
		However, it is proved in the Lemma 6.1 in \cite{CR} that there is no convex function on $B_r(x^*_{\infty})\subset \mathbb{R}^2$ satisfying \eqref{nonexieq}, a contradiction which completes the proof.
	\end{proof}
	For any $x\in B_{r_1}(b_i),$ by Lemma \ref{localisation1} and  the definition of $r_1$ (\eqref{defbi}), we have that the relation \eqref{p2} holds. Hence, by Corollary \ref{prop} (a), we have
	\begin{equation}\label{volu1}
		|S^c_h(x)\cap \Omega|\approx |S^c_h(x)|\approx h.
	\end{equation}
	Once we have the geometric decay property, by the same proof of \cite[Corollary 2.2]{C96} we have the following property.
	\begin{corollary}\label{nonconvpart}
		Suppose $x\in \overline{B_{r_1}(b_i)\cap \Omega}$ for some $b_i\in \Sigma_1.$ Then,
		$\nabla u(S^c_h(x))$ is conjugate to $S^c_h(x),$ namely, let $A$ be an affine transformation such that
		$A\left(S^c_h(x)\right)\sim B_{h^{\frac{1}{2}}}(0)$, then $A'^{-1} \left(\nabla u(S^c_h(x))\right)\sim B_{h^{\frac{1}{2}}}(p)$ for some point $p,$
		where $A'$ is the transpose of $A$;
	\end{corollary}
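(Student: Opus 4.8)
The plan is to follow the proof of \cite[Corollary 2.2]{C96} in the normalized picture, with the geometric decay of Lemma \ref{geodecay} playing the role that the engulfing property of sections of the dual plays in the convex case, and with the volume bound \eqref{volu1} encoding the geometry at the concave vertex $b_i$.

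First I would normalize. Take a symmetric affine map $T$ normalizing $S^c_h(x)$ as in the renormalization pair introduced above, so that $S^*:=T(S^c_h(x))$ satisfies $B_1(0)\subset S^*\subset B_{C_d}(0)$ with centre of mass at $0$, and set $\bar u:=\tfrac1h(u-L)\circ T^{-1}$, where $L$ is the affine function defining $S^c_h(x)$. By \eqref{volu1} we have $|\det T|\approx h^{-1}$, so $A:=h^{1/2}T$ is one admissible choice of the map in the statement. A change of variables in \eqref{global eq} gives $\det D^2\bar u\approx\chi_{S^*\cap T(\Omega)}$ in $S^*$, and by \eqref{volu1} and \eqref{defbi} one has $|S^*\cap T(\Omega)|\approx 1$ with $S^*\setminus\overline{T(\Omega)}$ convex; moreover $\bar u=0$ on $\partial S^*$, $\bar u(0)=-1$ by Corollary \ref{prop}(b), and $\inf_{S^*}\bar u\approx-1$. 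Writing $u=h\,\bar u\circ T+L$ and using $A'=h^{1/2}T'$ (transpose of the linear part of $T$), one obtains
\[
(A')^{-1}\bigl(\nabla u(S^c_h(x))\bigr)=(A')^{-1}\nabla L+h^{1/2}\,\nabla\bar u(S^*),
\]
so it suffices to prove $\nabla\bar u(S^*)\sim B_\rho(0)$ for some universal $\rho\approx 1$, and then $p:=(A')^{-1}\nabla L$ works.

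Next I would establish the two inclusions. For the lower one, fix $w$ with $|w|<1/C_d$: the convex function $\bar u(z)-w\cdot z$ equals $-1$ at $z=0$ and is $\ge-|w|\,C_d>-1$ on $\partial S^*\subset B_{C_d}(0)$, hence attains its minimum over $\overline{S^*}$ at an interior point, where its gradient vanishes; thus $w\in\nabla\bar u(S^*)$, and $B_{1/C_d}(0)\subset\nabla\bar u(S^*)$. For the upper one I would argue as in \cite[Corollary 2.2]{C96}: $|\nabla\bar u(S^*)|=\int_{S^*}\det D^2\bar u\approx|S^*\cap T(\Omega)|\approx 1$, so the image has measure $\approx1$; for $z\in\tfrac12 S^*$ one has $B_{1/2}(z)\subset S^*$, so comparing $\bar u$ with its supporting plane at $z$ on $B_{1/2}(z)$ (using $\bar u\le 0$, $\inf\bar u\ge-C$) gives $|\nabla\bar u|\le C$ on $\tfrac12 S^*$; and for the part of $S^*$ close to $\partial S^*$ one passes to the Legendre transform $\bar u^*$, whose gradient map inverts $\nabla\bar u$, realizes $\nabla\bar u(S^*)$ as comparable to a section of $\bar u^*$ of height $\approx1$, and uses the doubling property (Corollary \ref{prop}) together with the geometric decay of Lemma \ref{geodecay} to conclude that this dual section is equivalent to a ball. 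Combining the inclusions gives $\nabla\bar u(S^*)\sim B_\rho(0)$, and by the displayed identity this is the assertion of the corollary.

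The step I expect to be the main obstacle is the upper inclusion near $\partial S^*$: because $\det D^2\bar u$ vanishes on $S^*\setminus T(\Omega)$, $\bar u$ may be affine along segments there, so $\bar u^*$ develops corners and is not itself doubling; this is precisely why the geometric decay of Lemma \ref{geodecay}, rather than an engulfing property for $\bar u^*$, is the right substitute. Handling the resulting configuration uses, as in the proof of Lemma \ref{geodecay} and of \cite[Corollary 2.2]{C96}, that $S^*\setminus\overline{T(\Omega)}$ is convex while $|S^*\cap T(\Omega)|\approx1$, through a compactness-and-contradiction argument that reduces in the limit to the non-existence result \cite[Lemma 6.1]{CR}.
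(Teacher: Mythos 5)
Your proposal is correct and follows essentially the same route the paper intends: the paper gives no proof beyond the citation ``by the same proof of [Corollary~2.2]\cite{C96}'', and you reconstruct exactly that argument, correctly identifying Lemma~\ref{geodecay} as the substitute for the analogous decay/engulfing ingredient in the convex case. Your normalization computation (taking $A=h^{1/2}T$, checking $\det A\approx 1$ via \eqref{volu1}, and reducing the claim to $\nabla\bar u(S^*)\sim B_\rho(0)$ with $p=(A')^{-1}\nabla L$) is correct, as is the lower-inclusion argument via tilting by $w$ with $|w|<1/C_d$ and using $\bar u(0)=-1$, $\bar u|_{\partial S^*}=0$ from Corollary~\ref{prop}(b) and Remark~\ref{center bound}. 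The interior gradient bound on $\frac12 S^*$ is also fine (convexity gives $B_{1/2}(z)\subset S^*$ for $z\in\frac12 S^*$). The only part you leave as an outline --- the upper inclusion near $\partial S^*$ via the Legendre dual, the doubling of Corollary~\ref{prop}, and the geometric decay, reducing ultimately to the compactness/non-existence argument culminating in \cite[Lemma 6.1]{CR} --- is precisely the portion the paper also leaves to \cite{C96}, and you correctly flag it as the delicate step and correctly diagnose why engulfing of the dual is not directly available in the non-convex setting. This matches the paper's intended proof.
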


	
	
	
	Denote the inner normal vectors of $(b_{i-1}b_i)$ and $(b_{i}b_{i+1})$ by $e^{+}$ and $e^{-}$, respectively. 
	Let 
	\begin{equation}\label{halfball}
		B^{\pm}_{r_1}(b_i):=\overline{B_{r_1}(b_i)}\cap\left\{y\in \mathbb{R}^2 : e^{\pm}\cdot (y-b_i)\geq 0\right\}
		\subset \overline{B_{r_1}(b_i)}\cap \overline{\Omega}.
	\end{equation}
	Denote 
	\begin{equation}\label{halfsection}
		S^{\pm}_h(x):=S^c_h(x)\cap \left\{y\in \mathbb{R}^2: (y-b_i)\cdot e^{\pm}\geq 0\right\}.
	\end{equation} 
	For $x\in B^{\pm}_{r_1}(b_i),$ by Lemma \ref{localisation1} and the definition of $r_1$ \eqref{defbi} 
	we have that $S^{\pm}_h(x)\subset \overline{\Omega}.$ 
	
	Then, we have the following engulfing property.
	
	\begin{lemma}[Engulfing property]\label{engulfing2}
		There exists positive constants $h_0$ (smaller than that in Lemma \ref{dlem1}) and $C>1$ such that for any
		$x\in B^{+}_{r_1}(b_i),$ $y\in S^{+}_h(x),$ and $h\leq h_0$, we have $S^{+}_h(x)\subset S^{+}_{Ch}(y).$
	\end{lemma}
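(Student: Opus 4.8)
The plan is to follow the classical Caffarelli engulfing argument (as in \cite[Theorem 2]{C96}), but carried out on the ``half-sections'' $S^{\pm}_h$ so that we only ever work with the convex piece $B^{+}_{r_1}(b_i)$ of the domain on one side of the edge $(b_{i-1}b_i)$. Fix $x\in B^{+}_{r_1}(b_i)$ and $y\in S^{+}_h(x)$, and let $L$ be the affine function defining $S^c_h(x)$, so $u(y)<L(y)+0$ in the notation of \eqref{sect} — more precisely $u(x)+(y-x)\cdot\bar p+h> u(y)$. Let $\ell$ be a supporting affine function of $u$ at $y$, i.e.\ $\ell(z)\le u(z)$ with equality at $y$. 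The goal is to show that the section $S^{+}_{Ch}(y)=\{u<\ell+Ch\}\cap\{(\cdot-b_i)\cdot e^{+}\ge 0\}$ contains $S^{+}_h(x)$, which amounts to the pointwise estimate $u(z)<\ell(z)+Ch$ for every $z\in S^{+}_h(x)$; note all such $z$ already satisfy $(z-b_i)\cdot e^{+}\ge 0$ automatically since $S^{+}_h(x)\subset\overline\Omega$ lies in that half-plane when $x\in B^{+}_{r_1}(b_i)$, by the remark following \eqref{halfsection}.

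First I would renormalize: pick a symmetric affine map $T$ with $B_1(0)\subset S^*:=T(S^c_h(x))\subset B_{C_d}(0)$ and set $\bar u=\frac1h(u-L)\circ T^{-1}$, so that $\bar u$ is convex, $-1<\bar u<0$ on (the $T$-image of) $S^c_h(x)$, $\bar u=0$ on $\partial S^c_h(x)$, and $\det D^2\bar u\approx\chi_{S^*\cap T(\Omega)}$ with $|S^*\cap T(\Omega)|\approx1$ by Corollary \ref{prop}(a) together with \eqref{volu1}. The half-section structure is preserved: $T$ maps the half-plane $\{(\cdot-b_i)\cdot e^{+}\ge0\}$ to another half-plane, and $S^*\setminus\overline{T(\Omega)}$ is convex. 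Let $\bar x=Tx$ (the center of $S^*$) and $\bar y=Ty\in S^*\cap\{(\cdot-\bar b_i)\cdot\bar e^{+}\ge0\}$. By Corollary \ref{prop}(b) and the balancedness of sections, $\bar u(\bar x)=-1$ and $\bar u(\bar y)\ge -1$; what I need is a universal lower bound on $\bar u$ along the ray from $\bar y$ through $\bar x$ up to where it exits $S^*$, because a supporting plane $\bar\ell$ of $\bar u$ at $\bar y$ then satisfies $\bar\ell\ge\bar u(\bar y)-C$ on all of $S^*$ by convexity and the C\^1 bound, giving $\bar u<\bar\ell+C$ on $S^*$, i.e.\ $S^*\subset S^*_{C}(\bar y)$ after undoing the renormalization. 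The quantitative ingredient for this is exactly Corollary \ref{prop}(c): if $\bar x+\bar z\in\partial S^*$ then $(0-\bar u)(\bar x+t\bar z)\le C(1-t)^{1/2}$ on $[0,1]$, which controls how fast $\bar u$ climbs to $0$ near $\partial S^*$ and, combined with convexity, forces $\bar u(\bar y)$ and the slope of $\bar\ell$ to be universally bounded unless $\bar y$ is already deep inside; this is where the doubling property (via Corollary \ref{prop}) does all the work. Undoing $T$ turns the constant $C$ (universal) into the same $C$ since the engulfing inclusion is affine-invariant, yielding $S^c_h(x)\subset S^c_{Ch}(y)$, and intersecting both sides with the half-plane $\{(\cdot-b_i)\cdot e^{+}\ge0\}$ gives $S^{+}_h(x)\subset S^{+}_{Ch}(y)$.

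The main obstacle, and the only place where being near a concave vertex enters, is guaranteeing that the half-section $S^{+}_{Ch}(y)$ is itself a genuine ``nice'' object — in particular that the supporting plane $\ell$ at $y$ is unique enough and that $\{u<\ell+Ch\}$ stays inside $B_{2r_1}(b_i)$ so that its intersection with the half-plane is the whole story; this is handled by Lemma \ref{localisation1} (localization) after possibly shrinking $h_0$, exactly as the statement already permits (``$h_0$ smaller than that in Lemma \ref{dlem1}''). A secondary subtlety is that $\bar u$ need not be strictly convex across the flat part $S^*\setminus\overline{T(\Omega)}$, so the ray from $\bar y$ to $\bar x$ might graze that flat region; but since $\bar x\in T(\Omega)$ and $T(\Omega)\cap S^*$ is convex with $|T(\Omega)\cap S^*|\approx 1$, the segment $[\bar y,\bar x]$ meets $T(\Omega)$ in a set of length $\gtrsim 1$, which is all the Alexandrov estimate needs — so the argument of \cite[Theorem 2]{C96} goes through verbatim with $\Omega$ replaced by $S^*\cap T(\Omega)$ throughout. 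I would therefore phrase the proof as: (1) reduce to the renormalized setting; (2) quote Corollary \ref{prop}(a)--(c) to get the universal $C^0$ and $C^{1}$-type bounds on $\bar u$; (3) run the standard engulfing computation to get $S^*\subset S^*_C(\bar y)$; (4) de-renormalize and intersect with the half-plane.
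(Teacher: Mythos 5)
Your argument has a genuine gap at its final step, and it is exactly the point the paper is organized around. You define $S^{+}_{Ch}(y)$ as $\{u<\ell+Ch\}\cap\{(\cdot-b_i)\cdot e^{+}\ge 0\}$ with $\ell$ a \emph{supporting} affine function of $u$ at $y$, but in the lemma $S^{+}_{Ch}(y)=S^c_{Ch}(y)\cap\{(\cdot-b_i)\cdot e^{+}\ge 0\}$ where $S^c_{Ch}(y)$ is the \emph{centered} section of \eqref{sect}, whose defining slope is chosen so that $y$ is the center of mass. These two families are not interchangeable in the direction you need: by balancedness one has $S^c_{t}(y)\subset\{u<\ell+Ct\}$, so the ``tangent'' section contains the centered one, and your computation (which, via the conjugate-gradient property of Corollary \ref{nonconvpart}, does give $u<\ell+Ch$ on $S^c_h(x)$) only places $S^{+}_h(x)$ inside the \emph{larger} set. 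That does not yield $S^{+}_h(x)\subset S^{+}_{Ch}(y)$. The discrepancy is not cosmetic near $\partial\Omega$: tangent sections at boundary points are not even localized (they contain the whole contact set $\{u=\ell\}$, which may include a boundary segment), whereas Lemma \ref{localisation1} and all subsequent covering/doubling machinery (Lemma \ref{cover2}, Corollary \ref{prop}) are statements about centered sections. Moreover, your intermediate claim ``$S^c_h(x)\subset S^c_{Ch}(y)$, then intersect with the half-plane'' asserts the full centered-section engulfing near the concave vertex, which is precisely what the paper states is unclear and why the lemma is formulated for half-sections with $y\in S^{+}_h(x)$; the paper's proof avoids the classical computation altogether and argues by compactness, renormalizing $[S^c_{kh_k}(y_k),u]$ and deriving a contradiction with the nonexistence result of \cite[Lemma 6.1]{CR} for \eqref{nonexieq}, in the same spirit as Lemma \ref{geodecay}.

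A secondary weak point: you claim that Corollary \ref{prop}(c) plus convexity bounds the slope of the supporting plane at $\bar y$ universally. For $\bar y$ close to $\partial S^*$ this does not follow from (a)--(c); the universal gradient bound on the renormalized section is the content of Corollary \ref{nonconvpart}, which itself rests on the geometric decay Lemma \ref{geodecay}, so you should quote that corollary rather than (c). Even with that repaired, the proof still only controls $u-\ell$ and hence only the tangent section at $y$; to close the argument you would need an additional comparison between the centered section $S^c_{Ch}(y)$ and the tangent section at $y$ (equivalently, a bound on the difference between $\nabla u(y)$ and the centered-section slope $\bar p$ at $y$ valid up to the boundary near the concave vertex), which is not established in the paper and is the substantive difficulty the compactness proof is designed to circumvent.
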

	\begin{proof}
		The proof is similar to that of Lemma \ref{geodecay}. 
		Suppose the the engulfing property fails. Then there exists a sequence $h_k\leq \frac{1}{k},$
		$x_k\in B^{+}_{r_1}(b_i),$ $y_k\in S^{+}_{h_k}(x_k),$ such that 
		$S^{+}_{h_k}(x_k)\subset S^{+}_{kh_k}(y_k)$ fails.
		We renormalize $\left[S^c_{kh_k}(y_k), u\right]$ to $\left[S_k^*, \bar{u}_k\right]$ with $S_k^*=T_kS^c_{kh_k}(y_k)$ equivalent to the unit ball. 
		Up to a subsequence we may assume
		$\bar u_k\rightarrow u_\infty, S^*_k\rightarrow S_\infty, T_k(\Omega)\rightarrow \Omega_\infty$ as $k\rightarrow \infty.$
		Let  $S^*_{\frac{1}{k}}(x^*_k)$ be the renormalization of
		$S^c_{h_k}\left(x_1^k\right).$ Note that its height tends to zero as $k\rightarrow \infty.$ Therefore, in the limit, we obtain that $u_{\infty}$ coincides with a 
		supporting affine function $L_{\infty}$ in a convex set $K.$ Note that $0\in K\cap\overline{\Omega_\infty}.$
		Since $S^{+}_{h_k}(x_k)\subset S^{+}_{kh_k}(y_k)$ fails, we have that
		$S^{+}_{h_k}(x_k)\cap \partial S^{+}_{kh_k}(y_k)\cap \overline{\Omega}\ne \emptyset.$ 	
		Passing to limit, 
		we have that $K\cap\partial S_\infty\cap \overline{\Omega_\infty}\ne\emptyset. $
		Pick a point $x'\in K\cap\partial S_\infty\cap \overline{\Omega_\infty}.$
		Let $e$ be a unit vector orthogonal to $[0x']$ and point into the interior of  $\Omega_{\infty}.$
		Let $x_\infty=\frac{1}{2}(x'+0).$ Since $0, x'\in \overline{\Omega_\infty}\cap K,$ 
		there exists $r>0$ small such that $B_r(x_\infty)\cap \{x\cdot e\geq 0\}\subset \overline{\Omega_\infty}.$
		Now, similar to the proof of Lemma \ref{geodecay}, we have $u_{\infty}$  satisfies
		\begin{equation}\label{nonexieq}
			\det D^2 u_{\infty}\gtrsim \chi_{\left\{(x-x_{\infty})\cdot e^+>0\right\}} ,\qquad u_{\infty}|_{\left\{(x-x_{\infty})\cdot e^+=0\right\}}\text{ linear},
		\end{equation}
		in $B_r(x_{\infty}).$ 
		This contradicts to \cite[Lemma 6.1]{CR}.
	\end{proof}

	A direct consequence of the above engulfing property is the following Vitali covering lemma.
	\begin{lemma}\label{cover2}
		Let $D$ be a compact subset of $B^{+}_{r_1}(b_i)$ and let $\{S^+_{h_x}(x)\}_{x\in D}$ be a family of sets defined in \eqref{halfsection} with $h_x\leq h_0.$ Then, there exists a finite subfamily $\{S^+_{h_{x_j}}(x_j)\}_{j=1,\cdots, l}$ such that 
		$$D\subset \bigcup^{l}_{j=1} S^+_{h_{x_j}}(x_j)$$
		with $\left\{S^+_{\eta_0 h_{x_j}}(x_j)\right\}_{\left\{j=1,\cdots, l\right\}}$ disjoint, where $\eta_0>0$ is a universal constant.
	\end{lemma}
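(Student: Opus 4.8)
The plan is to run the standard Vitali covering argument, using the engulfing property of Lemma~\ref{engulfing2} in place of the usual "five-times" inclusion for balls. First, since $D$ is compact and $D\subset\bigcup_{x\in D}S^+_{h_x}(x)$, with each $S^+_{h_x}(x)$ relatively open in $\overline{\Omega}$ (being the intersection of an open section with a closed half-plane), I would extract a finite subcover, so that from the outset we may assume $D\subset\bigcup_{i=1}^N S^+_{h_{x_i}}(x_i)$ for finitely many points $x_1,\dots,x_N\in D$. Next I would apply the greedy selection: among all the sections in the finite family choose one of maximal height, call it $S^+_{h_{x_{j_1}}}(x_{j_1})$; having chosen $x_{j_1},\dots,x_{j_{r}}$, among the remaining sections whose $\eta_0$-dilates $S^+_{\eta_0 h_{x_i}}(x_i)$ are disjoint from all previously selected $S^+_{\eta_0 h_{x_{j_1}}}(x_{j_1}),\dots,S^+_{\eta_0 h_{x_{j_r}}}(x_{j_r})$, pick one of maximal height; stop when no such section remains. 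The value of $\eta_0\in(0,1)$ will be fixed below. This terminates since the family is finite, producing $S^+_{h_{x_1}}(x_{j_1}),\dots,S^+_{h_{x_{j_l}}}(x_{j_l})$ with the $\eta_0$-dilates pairwise disjoint, which is the second assertion of the lemma.

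It remains to verify the covering property $D\subset\bigcup_{k=1}^{l}S^+_{h_{x_{j_k}}}(x_{j_k})$. Take any $x\in D$; it lies in some $S^+_{h_{x_i}}(x_i)$ from the finite family. If $i$ was selected, we are done; otherwise the selection stopped choosing it because at the stage it was eligible its $\eta_0$-dilate already met some selected $S^+_{\eta_0 h_{x_{j_k}}}(x_{j_k})$ with $h_{x_{j_k}}\geq h_{x_i}$ (by the maximal-height rule, the selected one had height at least that of $x_i$). So there is a point $z\in S^+_{\eta_0 h_{x_i}}(x_i)\cap S^+_{\eta_0 h_{x_{j_k}}}(x_{j_k})$. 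The goal is to chain the engulfing inclusions: from $z\in S^+_{\eta_0 h_{x_i}}(x_i)$ and $x\in S^+_{h_{x_i}}(x_i)$ deduce, via Lemma~\ref{engulfing2} applied around the common section, that $x\in S^+_{C'\eta_0 h_{x_i}}(z)$ for a universal $C'$; similarly $z\in S^+_{\eta_0 h_{x_{j_k}}}(x_{j_k})\subset S^+_{h_{x_{j_k}}}(x_{j_k})$ gives control of $x_{j_k}$ relative to $z$, and then one more application of engulfing gives $x\in S^+_{C'' h_{x_{j_k}}}(x_{j_k})$ with $C''$ universal. Replacing $h_0$ by $h_0/C''$ and relabelling absorbs the constant, so $x\in S^+_{h_{x_{j_k}}}(x_{j_k})$ up to this harmless rescaling, exactly as in the proof of \cite[Corollary 2.2 and the attendant covering lemma]{C96}.

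The main obstacle is the chaining step: the engulfing property in Lemma~\ref{engulfing2} is stated for the "$+$" half-sections and requires the center to lie in $B^+_{r_1}(b_i)$ and the point in $S^+_h$ of that center, so each application must be checked to stay within this regime — in particular that the auxiliary center $z$ still lies in $B^+_{r_1}(b_i)$ (which follows since $z\in\overline{\Omega}\cap B_{2r_1}(b_i)$ and, by the definition \eqref{halfsection} of $S^+_h$, on the correct side of $e^+$), and that $\eta_0$ is chosen small enough that $S^+_{\eta_0 h}(x)\subset S^+_h(x)$ with room to spare so the two-step engulfing keeps all heights below $h_0$. Once the correct $\eta_0$ (depending only on the engulfing constant $C$) is pinned down, the rest is the routine Vitali bookkeeping. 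Since the finite family is already fixed, no issue of countability or measurability arises, and the disjointness of the $\eta_0$-dilates is immediate from the selection rule.
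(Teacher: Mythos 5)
Your overall strategy (greedy selection by maximal height so that the $\eta_0$-dilates are disjoint, then chaining the engulfing property) is the standard one, and the paper itself gives no proof beyond asserting the lemma as a consequence of Lemma \ref{engulfing2}. However, your covering verification has a genuine gap. The point $x\in D$ you must cover is only known to lie in the \emph{full} section $S^+_{h_{x_i}}(x_i)$ of your initial finite subcover, not in its $\eta_0$-dilate. Lemma \ref{engulfing2} applied at the center $x_i$ with the point $z$ gives $S^+_{h_{x_i}}(x_i)\subset S^+_{C h_{x_i}}(z)$, hence at best $x\in S^+_{C h_{x_i}}(z)$ --- not $x\in S^+_{C'\eta_0 h_{x_i}}(z)$ as you claim (indeed $x$ may sit near $\partial S^+_{h_{x_i}}(x_i)$, at distance of order $h_{x_i}^{1/2}$ from $z$, which no section of height $\sim\eta_0 h_{x_i}$ centered at $z$ can reach). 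Consequently your chain only yields $x\in S^+_{C''h_{x_{j_k}}}(x_{j_k})$ with a universal $C''>1$, i.e.\ only the \emph{enlarged} selected sections cover $D$ --- exactly as in the classical Vitali lemma for balls, where disjointness of shrunk balls plus maximal radius never gives covering by the original balls. The final move ``replace $h_0$ by $h_0/C''$ and relabel'' cannot absorb this: the heights $h_x$ are prescribed data of the family, the lemma requires covering by members of the given family at their original heights, and $S^+_{C''h_{x_{j_k}}}(x_{j_k})$ is not such a member; shrinking $h_0$ changes nothing about the given $h_x$.

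The stated form of the lemma is nevertheless true because every point of $D$ is itself the \emph{center} of a section of the family, and the standard proof (as in \cite{C96}, \cite{DFS}, \cite{SY2}) exploits precisely this: one runs the greedy selection over the whole family $\{S^+_{h_z}(z)\}_{z\in D}$ (picking heights within a factor $2$ of the supremum of the eligible ones; the selection cannot go on forever at a fixed height scale because $|S^+_{\eta_0 h}|\approx\eta_0 h$ by \eqref{volu1} and the shrunk sections are disjoint inside a bounded set), and for a non-selected center $z$ one only needs to capture the single point $z$, starting from $z\in S^+_{\eta_0 h_z}(z)$. Then every height in the engulfing/geometric-decay chain carries the factor $\eta_0$, so choosing $\eta_0$ small relative to the engulfing constant, and using Lemma \ref{geodecay} both as quasi-monotonicity in $h$ at a fixed center and to convert ``$w$ lies in a low section of $x_{j_k}$'' into an inclusion inside $S^+_{h_{j_k}}(x_{j_k})$, lands $z$ in the selected section at its original height; the finite subfamily is extracted by compactness only at the end, the selected full sections being relatively open. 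Your move of passing to a finite subcover at the outset destroys this structure, since the point to be covered is then no longer the center of a section in the working family. Two smaller inaccuracies: $z\in\overline{\Omega}\cap B_{2r_1}(b_i)$ does not give $z\in B^{+}_{r_1}(b_i)$ (which requires $|z-b_i|\le r_1$), so the admissibility of auxiliary centers for Lemma \ref{engulfing2} needs a genuine argument (e.g.\ running the construction with $r_1/2$), and all intermediate heights must be kept below the $h_0$ of Lemma \ref{engulfing2}; these are fixable, but the covering step itself must be redone along the lines above.
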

	Note that the above lemma also holds for $B^{+}_{r_1}(b_i), \{S^+_{h_x}(x)\}_{x\in D}$ replaced
	by $B^{-}_{r_1}(b_i), \{S^-_{h_x}(x)\}_{x\in D}$
	
	
	\section{Proof of Theorem \ref{t1}}
	
	In this section, we prove the global $W^{2,1+\epsilon}$ estimate of $u$ near the concave vertices, and thus complete the proof of Theorem \ref{t1}.
	The proof mainly follows the methods introduced in
	De Philippis-Figalli \cite{DF} and De Philippis-Figalli-Savin \cite{DFS}, see also  Chen-Liu-Wang \cite{CLW2} and Savin-Yu \cite{SY2} for the boundary case.
	Let $b_i\in V_1$ be a concave vertex of $\Omega.$ Let $h_0, r_1, B^{\pm}_{r_1}(b_i), S_h^{\pm}$ be as in \eqref{defbi}, Lemma \ref{engulfing2}, \eqref{halfball} and \eqref{halfsection} respectively. 
	All we need to show is that $\int_{B^{\pm}_{r_1}(b_i)}|D^2u|^{1+\epsilon}\leq C.$
	In the following we assume $u\in C^2(\Omega)$  and the estimate for general solutions can be deduced via a standard approximation argument.
	
	Given $x_0\in B^{\pm}_{r_1}(b_i),$  $S^c_h(x_0)=\left\{x\in \mathbb{R}^2|u(x)<\ell\right\}$ for some affine function $\ell.$ 
	We have $u=\ell$ on $\partial S^c_h(x_0)$ and $\ell(x_0)=u(x_0)+h.$
	Let $A$ be the affine transformation such that $A\left(S^c_h(x_0)\right)\approx B_{h^{\frac{1}{2}}}$ with $\det\, A=1.$ As in \cite{DFS} we define ${\bf a}(S^c_h(x_0)):=\|A\|^2$ as the normalised size of $S^c_h(x_0).$ 
	
	\begin{proposition}\label{decay} 
		There exists a universal constant $C_0>0$ such that
		for any $x_0\in \Omega,$ we have
		\begin{equation}\label{decay1}
			\int_{S^+_h(x_0)\cap B^{+}_{r_1}(b_i)}\|D^2u\|\leq C_0{\bf a} \big|\{C_0^{-1}{\bf a}\leq \|D^2u\|\leq C_0{\bf a}\}\cap S^+_{\eta_0 h}(x_0)\cap\Omega\big|,
		\end{equation}
		where ${\bf a}:={\bf a}(S^c_h(x_0)).$
	\end{proposition}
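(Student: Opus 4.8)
The plan is to argue by contradiction and compactness, in the spirit of De Philippis--Figalli \cite{DF}, De Philippis--Figalli--Savin \cite{DFS} and Savin--Yu \cite{SY2}, adapted to the concave-vertex geometry. The first reduction is to the case $x_0\in B^{+}_{r_1}(b_i)$: if $S^c_h(x_0)$ does not meet $B^{+}_{r_1}(b_i)$ the left-hand side of \eqref{decay1} is zero, and by Lemma \ref{localisation1} together with \eqref{defbi} any section of height $h<h_0$ that does meet it is contained in $B_{2r_1}(b_i)$, where $\Omega\cap\{(y-b_i)\cdot e^{+}\ge 0\}$ is a half-plane, so the boundary seen by $S^{+}_h(x_0)$ is \emph{flat}. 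I then renormalize $[S^c_h(x_0),u]$ to a pair $[S^*,\bar u]$ via a symmetric affine map $T$ with $|\det T|\sim h^{-1}$ and $\|T\|^{2}\sim h^{-1}{\bf a}$, so that $B_1\subset S^*\subset B_{C_d}$, $\bar u=0$ on $\partial S^*$, $-C\le\bar u\le 0$ on $S^*$, $\det D^2\bar u\approx\chi_{S^*\cap T(\Omega)}$, the set $S^*\setminus\overline{T(\Omega)}$ is convex with $|S^*\cap T(\Omega)|\sim 1$ (using \eqref{volu1} and Corollary \ref{prop}), and, since $S^{+}_h(x_0)\cap B^{+}_{r_1}(b_i)\subset\overline{\Omega}$, its renormalized image $(S^*)^{+}$ lies in $T(\overline{\Omega})$, so $\det D^2\bar u$ is pinched between positive universal constants on all of $(S^*)^{+}$.

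The main computational step is to transfer \eqref{decay1} to this picture. For the left-hand side, $D^2u(T^{-1}y)=h\,T^{t}D^2\bar u(y)\,T$ gives $\|D^2u(T^{-1}y)\|\le{\bf a}\,\|D^2\bar u(y)\|$, and since $dx=|\det T^{-1}|\,dy$ with $|\det T^{-1}|\sim h$, this yields $\int_{S^{+}_h(x_0)\cap B^{+}_{r_1}(b_i)}\|D^2u\|\le C\,{\bf a}\,h\int_{(S^*)^{+}}\|D^2\bar u\|$. For the right-hand side the essential point is that the operator norm is \emph{not} preserved by $T$, so one must use the non-degeneracy of the Monge-Amp\`ere measure: at any point $y$ of twice differentiability of $\bar u$ in $T(\Omega)$ with $\|D^2\bar u(y)\|\le C_0$, the pinching $\det D^2\bar u(y)\sim 1$ forces $c(C_0)\,I\le D^2\bar u(y)\le C_0\,I$, hence $D^2u(T^{-1}y)$ is comparable to $h\,T^{t}T$ and has largest eigenvalue comparable to $h\|T\|^{2}\sim{\bf a}$. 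Choosing the universal constant $C_0$ appropriately, the renormalized set $\{\,y\in T(\Omega):\|D^2\bar u(y)\|\le C_0\,\}$ therefore maps under $T^{-1}$ into $\{C_0^{-1}{\bf a}\le\|D^2u\|\le C_0{\bf a}\}\cap\Omega$. Combining this with (i) the a.e.\ lower bound $\|D^2\bar u\|\ge c$ on $T(\Omega)$ coming from $\det D^2\bar u\sim 1$, (ii) the Chebyshev estimate $|\{\|D^2\bar u\|>C_0\}\cap(S^*)^{+}|\le C_0^{-1}\int_{(S^*)^{+}}\|D^2\bar u\|$, and (iii) the volume bound $|T(S^{+}_{\eta_0 h}(x_0)\cap\Omega)|\ge c\,\eta_0$ (from $x_0\in B^{+}_{r_1}(b_i)$, the balancedness of sections about their centres, and \eqref{volu1}), the inequality \eqref{decay1} reduces, up to universal constants, to the single scale-invariant bound
\begin{equation}\label{planbound}
\int_{(S^*)^{+}}\|D^2\bar u\|\ \le\ C .
\end{equation}

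Establishing \eqref{planbound} with a \emph{universal} constant for all such renormalized pairs is the step I expect to be the main obstacle, since $\|D^2\bar u\|$ has to be controlled in $L^{1}$ up to $\partial S^*$ --- along both the curved arc $\{\bar u=0\}$ and the flat portion of $\partial(S^*)^{+}$ --- where no such bound is available for a generic bounded convex solution. I would prove it by contradiction: if it failed there would be a sequence $\bar u_k$ on $S^*_k$ with $\int_{(S^*_k)^{+}}\|D^2\bar u_k\|\to\infty$; by Blaschke's selection theorem, Corollary \ref{prop}, and the interior estimates of \cite{C92} one passes to a limit $\bar u_k\to u_\infty$, $S^*_k\to S_\infty$, $T_k(\Omega_k)\to\Omega_\infty$ with $\det D^2u_\infty\approx\chi_{S_\infty\cap\Omega_\infty}$, $S_\infty\setminus\overline{\Omega_\infty}$ convex, and $\Omega_\infty$ having flat boundary on the side where $(S_\infty)^{+}$ lies. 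On such a limit the doubling property (Proposition \ref{doubling property}) and the engulfing property (Lemma \ref{engulfing2}) hold, so the flat-boundary $W^{2,1+\epsilon}$ theory of \cite{SY2} (cf.\ \cite{CLW2}) applies and gives $\|D^2u_\infty\|\in L^{1+\epsilon}$ up to that boundary, hence finiteness of $\int_{(S_\infty)^{+}}\|D^2u_\infty\|$; a blow-up argument excluding concentration of $D^2\bar u_k$ near the boundary, in the manner of \cite[Lemma 6.1]{CR}, then upgrades this to a uniform bound for $\int_{(S^*_k)^{+}}\|D^2\bar u_k\|$, a contradiction. Once \eqref{planbound} is available, the two paragraphs above complete the proof.
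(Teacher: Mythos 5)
Your reduction to $x_0\in B^{+}_{r_1}(b_i)$, the renormalization, and the transfer of \eqref{decay1} to the normalized picture (the Chebyshev step, the eigenvalue pinching forced by the two-sided determinant bound, and the mapping back through the affine transformation) all match the paper's argument. The genuine gap is the bound $\int_{(S^*)^{+}}\|D^2\bar u\|\le C$, which you correctly single out as the crux but do not actually prove. The compactness scheme you sketch cannot close: to extract weak-$*$ limits of the Hessian measures up to the boundary of $(S^*_k)^{+}$ you already need the uniform mass bound you are trying to establish; locally uniform convergence of convex functions only yields Hessian-mass bounds on compact subsets of the interior, so mass may a priori concentrate near $\partial S^*_k$ or near the flat portion, and finiteness of $\int_{(S_\infty)^{+}}\|D^2u_\infty\|$ (even granting that the flat-boundary theory of \cite{SY2} applies to the limit) gives no control on $\limsup_k\int_{(S^*_k)^{+}}\|D^2\bar u_k\|$, since semicontinuity goes the wrong way. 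The decisive step, ``a blow-up argument excluding concentration of $D^2\bar u_k$ near the boundary, in the manner of \cite[Lemma 6.1]{CR},'' is asserted rather than argued: that lemma is a Liouville-type nonexistence result (used in this paper for the geometric decay and engulfing lemmas) and does not rule out boundary concentration of Hessian mass. Invoking a $W^{2,1+\epsilon}$ theory at this stage is also circular in spirit, since Proposition \ref{decay} is precisely the engine from which such estimates are derived.

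What is missing is that the bound is elementary once you use Corollary \ref{nonconvpart}, which you never invoke. In two dimensions a convex function satisfies $\|D^2\bar u\|\le\Delta\bar u$, so by the divergence theorem $\int_{A(S^+_h(x_0))}\|D^2\bar u\|\le\int_{\partial(A(S^+_h(x_0)))}\bar u_\nu$. After the paper's renormalization with $\det A=1$ (yours differs only by the extra factor $1/h$ in $\bar u$), Corollary \ref{nonconvpart} says that $\nabla u(S^c_h(x_0))$ is conjugate to $S^c_h(x_0)$, which gives $|\nabla\bar u|\lesssim h^{1/2}$ on $A(S^c_h(x_0))$, while $|\partial(A(S^c_h(x_0)))|\approx h^{1/2}$; hence the boundary integral is $\lesssim h$, and combined with $|S^{+}_{\eta_0 h}(x_0)\cap B^{+}_{r_1}(b_i)|\gtrsim h$ (from \eqref{volu1} and the balancedness of centred sections) this is exactly \eqref{es11}. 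No compactness argument is needed. With this one-line replacement for your unproved bound, the remainder of your outline coincides with the paper's proof.
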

	
	\begin{proof}
		
		Let $A$ be the affine transformation such that $A\left(S^c_h(x_0)\right)\approx B_{h^{\frac{1}{2}}}(0)$ with $\det\, A=1.$ 
		Let $\bar{u}(x):=(u-\ell)(A^{-1}x).$ It is straightforward to check that $\det D^2\bar{u}=\tilde{f},$ where $C^{-1}<\tilde{f}(x)<C.$
		By  Corollary \ref{nonconvpart},  we have $|\nabla\bar{u}(x)|\lesssim h^{\frac{1}{2}}$ for any $x\in  A(S^c_h(x_0)).$ 
		Note that $|\partial (AS^c_h(x_0))|\approx h^{\frac{1}{2}}.$
		Therefore, 
		\begin{equation}\label{es11}
			\int_{S^+_h(x_0)\cap B^{+}_{r_1}(b_i)}\|D^2u\|\leq {\bf a}\int_{A(S^+_h(x_0))}\|D^2\bar{u}\|={\bf a}\int_{\partial (A(S^+_h(x_0)))}\bar{u}_\nu\approx {\bf a} h\leq C_1{\bf a}|S^+_{\eta_0 h}(x_0)\cap B^{+}_{r_1}(b_i)|,
		\end{equation}
		where $C_1$ is a universal constant.
		
		From \eqref{es11}, it follows that 
		\begin{equation}\label{es12}
			\big|\{\|D^2\bar{u}\|\leq 2C_1\}\cap A(S^+_{\eta_0 h}(x_0)\cap B^{+}_{r_1}(b_i))\big|\geq \frac{1}{2}|S^+_{\eta_0 h}\cap B^{+}_{r_1}(b_i)|.
		\end{equation}
	
	On the other hand, since $C^{-1}<\det D^2\bar{u}<C$ in $A\Omega,$ we have 
	\begin{equation}\label{es13}
		C_0^{-1} I\leq D^2\bar{u}<C_0I\qquad \text{ in}\quad \{\|D^2\bar{u}\|\leq 2C_1\}\cap A (B^{+}_{r_1}(b_i)),
	\end{equation}
	for some universal constant $C_0.$ By the definition of $\bar{u},$ it is straightforward to check that
	\begin{equation}\label{es14}
		\{C_0^{-1}I\leq D^2\bar{u}<C_0I\}\cap A (B^{+}_{r_1}(b_i)) \subset A\{C_0^{-1}{\bf a}\leq \|D^2u\|\leq C_0{\bf a}\}\cap A (B^{+}_{r_1}(b_i)).
	\end{equation}
	Combining \eqref{es11}--\eqref{es14}, we obtain the desired result in equation \eqref{decay1}.
\end{proof}

Let us define ${\bf a_h}$ to be the normalized size ${\bf a}$ as in Lemma \ref{decay} for the section $S^c_h(x_0).$ Let $N>0$ be a constant to be determined later. For each integer $m$, let $$D^+_m:=\{x\in B^{+}_{r_1}(b_i)\cap \Omega: |D^2u(x)|\ge N^m\}.$$ 

\begin{lemma}\label{LevelSetDecay}
	Let $u$ be a globally Lipschitz convex solution to \eqref{be1} subject to boundary condition $\nabla u(\Omega) =\Omega^*.$ There is a universal constant $\tau\in(0,1)$ such that    $$\int_{D^+_{m+1}}|D^2u|\le (1-\tau)\int_{D^+_m}|D^2u|.$$
\end{lemma}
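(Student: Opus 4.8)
\textbf{Proof plan for Lemma \ref{LevelSetDecay}.}

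The plan is to run the standard De Philippis--Figalli--Savin iteration, carried out on the two half-neighborhoods $B^{\pm}_{r_1}(b_i)$ separately so that the Vitali covering Lemma \ref{cover2} applies. I present the argument for $D^+_m$; the bound on $B^{-}_{r_1}(b_i)\cap\Omega$ is symmetric, and adding the two pieces gives the claimed estimate for $D^+_{m+1}$. Fix $N$ large (to be chosen). For each $x\in D^+_m$, I want to produce a centered section $S^c_{h_x}(x)$ with $h_x\le h_0$ whose normalized size ${\bf a}_{h_x}$ is comparable to $N^m$, i.e. ${\bf a}_{h_x}\sim N^m$. The existence of such an $h_x$ follows from the continuity and the monotone-type behaviour of $h\mapsto {\bf a}_h$: as $h\to 0$ one has ${\bf a}_h\to |D^2u(x)|$ (up to universal factors) at a point of twice-differentiability, while for $h$ near $h_0$ the size ${\bf a}_h$ is bounded by a constant depending only on the geometry; since $|D^2u(x)|\ge N^m$ with $N^m$ below a threshold controlled by $h_0$ (the top finitely many levels are handled trivially because $D^+_{m}$ is then contained in a fixed-measure set), an intermediate value of $h$ gives ${\bf a}_{h_x}\sim N^m$. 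Here I use Corollary \ref{nonconvpart} to guarantee that $\nabla u(S^c_h(x))$ is genuinely conjugate to $S^c_h(x)$ near $b_i$, so that ${\bf a}$ is the correct notion of normalized size, and I use \eqref{volu1} so that volumes of sections behave like $h$.

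Next I apply the Vitali-type selection of Lemma \ref{cover2} to the family $\{S^+_{h_x}(x)\}_{x\in D^+_m}$ (note $D^+_m$ is compact up to a null set, or I pass to a compact exhaustion): I extract $x_1,\dots,x_l$ with $D^+_m\subset\bigcup_j S^+_{h_{x_j}}(x_j)$ and the dilated sections $S^+_{\eta_0 h_{x_j}}(x_j)$ pairwise disjoint. On each selected section I invoke Proposition \ref{decay}: writing ${\bf a}_j:={\bf a}(S^c_{h_{x_j}}(x_j))\sim N^m$,
\begin{equation*}
\int_{S^+_{h_{x_j}}(x_j)\cap B^{+}_{r_1}(b_i)}\|D^2u\|\le C_0{\bf a}_j\,\bigl|\{C_0^{-1}{\bf a}_j\le\|D^2u\|\le C_0{\bf a}_j\}\cap S^+_{\eta_0 h_{x_j}}(x_j)\cap\Omega\bigr|.
\end{equation*}
Now choose $N:=2C_0^2$ (or larger). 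Then $C_0{\bf a}_j\lesssim N^{m+1}/2<N^{m+1}$ for $N$ large, so the set $\{C_0^{-1}{\bf a}_j\le\|D^2u\|\le C_0{\bf a}_j\}$ is disjoint from $D^+_{m+1}=\{\|D^2u\|\ge N^{m+1}\}$; hence it is contained in $D^+_m\setminus D^+_{m+1}$. Summing over $j$ and using the disjointness of the $S^+_{\eta_0 h_{x_j}}(x_j)$, and that $C_0{\bf a}_j\le C N^m\le C\|D^2u\|$ on $D^+_m$, I get
\begin{equation*}
\int_{D^+_m}\|D^2u\|\le\sum_j\int_{S^+_{h_{x_j}}(x_j)\cap B^{+}_{r_1}(b_i)}\|D^2u\|\le C\sum_j\int_{(D^+_m\setminus D^+_{m+1})\cap S^+_{\eta_0 h_{x_j}}(x_j)}\|D^2u\|\le C\int_{D^+_m\setminus D^+_{m+1}}\|D^2u\|,
\end{equation*}
where the last inequality uses that $\|D^2u\|\sim N^m$ on the relevant level set and $S^+_{\eta_0 h_{x_j}}(x_j)\subset S^+_{h_{x_j}}(x_j)$ together with the disjointness. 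Rearranging, $\int_{D^+_{m+1}}\|D^2u\|\le(1-\tau)\int_{D^+_m}\|D^2u\|$ for $\tau:=1/C\in(0,1)$ universal. Doing the same on $B^{-}_{r_1}(b_i)$ and adding completes the proof.

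The main obstacle I anticipate is the clean selection of the height $h_x$ with ${\bf a}_{h_x}\sim N^m$ uniformly near the concave vertex: one must verify that the normalized-size function behaves monotonically (in the weak sense used in \cite{DFS}) all the way down to scale zero even though $S^c_h(x)\cap\Omega$ is only a fixed positive fraction of $S^c_h(x)$ rather than the whole section, and that the engulfing property of Lemma \ref{engulfing2} — which only holds for the \emph{half}-sections $S^+_h$, not for full centered sections — is compatible with the covering step. This is precisely why the argument must be split into the $e^+$ and $e^-$ halves, and why Corollary \ref{nonconvpart} (conjugacy of $\nabla u(S^c_h(x))$ to $S^c_h(x)$) is essential: it is what lets Proposition \ref{decay}'s trace estimate $\int_{\partial(A(S^+_h))}\bar u_\nu\approx {\bf a}h$ go through with ${\bf a}$ in place of the usual volume normalization. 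The remaining ingredients — disjointness bookkeeping, the trivial treatment of the finitely many top levels where $N^m\gtrsim h_0^{-1}$, and the $C^2$ approximation — are routine.
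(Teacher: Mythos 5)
Your overall strategy is the same De Philippis--Figalli--Savin iteration the paper uses (choose sections with normalized size in a dyadic window, apply Proposition \ref{decay}, cover via Lemma \ref{cover2}, sum using disjointness, rearrange), but there is a genuine gap in where you center the covering sections. You cover the \emph{lower} level set $D^+_m$ and claim that, for each $x\in D^+_m$, you can pick $h_x$ with ${\bf a}_{h_x}\sim N^m$ so that the set $E_j=\{C_0^{-1}{\bf a}_j\le\|D^2u\|\le C_0{\bf a}_j\}\cap S^+_{\eta_0 h_j}(x_j)\cap\Omega$ lands in $D^+_m\setminus D^+_{m+1}$. Two problems. First, ``disjoint from $D^+_{m+1}$, hence contained in $D^+_m\setminus D^+_{m+1}$'' is a non sequitur: disjointness from $D^+_{m+1}$ only places $E_j$ in the complement of $D^+_{m+1}$; the inclusion $E_j\subset D^+_m$ requires the \emph{lower} bound $C_0^{-1}{\bf a}_j\ge N^m$, i.e. ${\bf a}_j\ge C_0N^m$, which ``${\bf a}_j\sim N^m$'' does not give. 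Second, and more seriously, that lower bound may be unattainable at the points you are covering: as $h\to0$ one only has ${\bf a}_h\to c\,\|D^2u(x)\|$ with $c$ a universal constant that can be less than $1$ (this is exactly the constant $C_2$ in the paper's proof, where ${\bf a}_h$ ranges from $1/C_1$ up to about $\|D^2u(x)\|/C_2$). So at a point $x\in D^+_m$ with $\|D^2u(x)\|$ only barely above $N^m$, the reachable sizes ${\bf a}_h$ top out near $N^m/C_2<C_0N^m$, and the window $[C_0N^m, C_0^{-1}N^{m+1})$ you need is empty of attainable values. Without $E_j\subset D^+_m$ you only obtain $\int_{D^+_m}|D^2u|\le C\int_{(\Omega\cap B^+_{r_1}(b_i))\setminus D^+_{m+1}}|D^2u|$, which does not rearrange into any geometric decay of the level-set integrals.

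The paper sidesteps this by covering the \emph{upper} level set $D^+_{m+1}$: at $x\in D^+_{m+1}$ one has $\|D^2u(x)\|\ge N^{m+1}$, so ${\bf a}_h$ sweeps up to $N^{m+1}/C_2$, and choosing $N\ge\max\{C_0C_2,C_0^2\}$ guarantees the window $[C_0N^m,\,C_0^{-1}N^{m+1})$ is reached; then Proposition \ref{decay} places the mass in $D^+_m\setminus D^+_{m+1}$ and one gets $\int_{D^+_{m+1}}|D^2u|\le C\int_{D^+_m\setminus D^+_{m+1}}|D^2u|$, which rearranges to the stated decay. (Your final algebraic rearrangement from $\int_{D^+_m}\le C\int_{D^+_m\setminus D^+_{m+1}}$ would be fine \emph{if} you had that inequality, and your use of disjointness $\sum_j\chi_{S^+_{\eta_0 h_j}}\le 1$ is actually cleaner than the paper's bookkeeping.) If you insist on centering at $D^+_m$, you could salvage the argument by taking the attainable window ${\bf a}_j\in[N^m/C_2,\,C_0^{-1}N^{m+1})$, which only yields $E_j\subset D^+_{m-1}\setminus D^+_{m+1}$ and hence a two-step decay $\int_{D^+_{m+1}}\le(1-\tau)\int_{D^+_{m-1}}$; this still suffices for $W^{2,1+\epsilon}$ but is not the lemma as stated. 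The cleanest fix is simply to center the covering at $D^+_{m+1}$ as in the paper, with $N$ chosen to absorb both $C_0$ and the universal constant relating ${\bf a}_h$ to $\|D^2u(x)\|$ as $h\to0$.
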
 

\begin{proof}
	
	For $x\in D^+_{m+1}$, $|D^2u(x)|\ge N^{m+1}$, we know that ${\bf a_h}$ ranges from $1/C_1$ to $N^{m+1}/C_2$ as $h$ changes from $h_0$ to $0$ for some universal constants $C_1$ and $C_2.$
	
	Thus, we let $N :=\max\{C_0C_2,C_0^2\}$ and we have $C_0 N^{m}\le N^{m+1}/C_2$. Hence we can take $0<h_x\leq \frac{h_0}{\bar{C}}$ so that $$C_0N^m\le {\bf a_h}< C_0^{-1}N^{m+1}.$$ 
	
	Let $\mathcal{F}$ be the set of sections $\left\{S^c_{h_x}(x)\right\}_{x\in D^+_{m+1}}.$ By Lemma \ref{cover2}, we can select a finite subfamily $\{S^+_{h_j}(x_j)\}_{ \left\{j=1,\cdots,l\right\}}$ such that
	\begin{equation}\label{Cov}
		D^+_{m+1}\subset \bigcup^{l}_{i=1} S^+_{h_i}(x_i).
	\end{equation}
	For each section $S^c_{h_x}(x)\in \mathcal{F},$
	Proposition \ref{decay} implies $$\int_{S^+_h(x_0) \cap  B^{+}_{r_1}(b_i)}\|D^2u\| \leq C_0 {\bf a_h} \big|\{ C_0^{-1}{\bf a_h}\leq \|D^2u\| \leq C_0 {\bf a_h}\} \cap S^+_{ \eta_0 h}(x_0) \cap  B^{+}_{r_1}(b_i) \big|.$$ Moreover, by our choice of $h_x,$  we have $C_0N^m\le {\bf a_h}< C_0^{-1}N^{m+1}$ and we can conclude that
	\begin{equation*}
		\int_{S^+_h(x)\cap B^{+}_{r_1}(b_i)}|D^2u|\le C\int_{S^c_{\eta_0h(x)}\cap (D^+_m\setminus D^+_{m+1})}|D^2u|.
	\end{equation*} 
	
	Now, we can combine the above results with \eqref{Cov} to deduce the following:
	\begin{align*}
		\int_{D^+_{m+1}}|D^2u|&\le\sum\int_{S^+_{h_j}(x_j)\cap B^{+}_{r_1}(b_i)}|D^2u|\\&\le \sum C\int_{S^+_{\eta_0h_j}(x_j)\cap (D^+_m\setminus D^+_{m+1})}|D^2u|\\&=C\int_{D^+_m\setminus D^+_{m+1}}|D^2u|\sum\chi_{S^+_{\eta_0h_j}(x_j)}\\&\le C l\int_{D^+_m\setminus D^+_{m+1}}|D^2u|\\&=C l\int_{D^+_m}|D^2u|-C l\int_{D^+_{m+1}}|D^2u|.
	\end{align*}
	Therefore, we conclude that
	$$\int_{D^+_{m+1}}|D^2u|\le \frac{C l}{1+C l}\int_{D^+_{m}}|D^2u|,$$ where $C$ is a universal constant.

\end{proof} 

Once the decay of integrals over $D^+_m$ is established, the $W^{2,1+\eps}$-estimate follows directly from a standard iteration argument.
\begin{proof}[Proof of Theorem \ref{t1}]
	Let $N$ and $\tau$ be as in Lemma \ref{LevelSetDecay}. 
	
	Fix an integer $m_0.$ For $t>N^{m_0}$, there is an integer $k$ such that $$N^{m_0+k}\le t< N^{m_0+k+1},$$ that is, $k\le \log_N t-m_0<k+1.$
	
	By Lemma \ref{LevelSetDecay}, an iterative argument leads to 
	\begin{align*}\int_{\{|D^2u|\ge t\}\cap B^{+}_{r_1}(b_i)}|D^2u|&\le  \int_{D^+_{m_0+k}}|D^2u|\\&\le (1-\tau)^{k}\int_{D^+_{m_0}}|D^2u|\\&\le (1-\tau)^{-1-m_0}\cdot t^{\log_N(1-\tau)}\cdot\int_{D^+_{m_0}}|D^2u|\\&\leq Ct^{-\eps_0},
	\end{align*}
	where $\eps_0:=-\log_N(1-\tau)>0$ is universal, and $C$ depends on the inner and outer radii of $\Omega$ and $\Omega^*,$ and we use $\int_{D^+_{m_0}}|D^2u|\le\int_{ \Omega}\Delta u=\int_{\partial\Omega}\nabla u\cdot \nu\le C$ in the last inequality. 
	
	By Markov's inequality, this gives $|\{|D^2u|\ge t\}\cap B^{+}_{r_1}(b_i)|\le Ct^{-1-\eps_0}$ for $t\ge N^{m_0}$.
	
	Therefore, for any $\eps\in(0,\eps_0),$ it follows 
	\begin{align*}
		\int_{B^{+}_{r_1}(b_i)} |D^2u|^{1+\eps}&=\int_{D^+_{m_0}}|D^2u|^{1+\eps}+\int_{\{|D^2u|\le N^{m_0}\}\cap B^{+}_{r_1}(b_i)}|D^2u|^{1+\eps}\\&\le C_\eps\int_{N^{m_0}}^\infty t^{\eps}|\{|D^2u|\ge t\}\cap B^{+}_{r_1}(b_i)|dt+N^{m_0(1+\eps)}| B^{+}_{r_1}(b_i)|\\&\le C\int_{N^{m_0}}^\infty t^{-1-\eps_0+\eps}dt+N^{m_0(1+\eps)}| B^{+}_{r_1}(b_i)|\\&=CN^{m_0(\eps-\eps_0)}+N^{m_0(1+\eps)}| B^{+}_{r_1}(b_i)|.
	\end{align*}
	Hence, we have proved that
	$$\|u\|_{W^{2,1+\epsilon}(B^+_{r_1}(b_i)\cap \Omega)}\leq C^+.$$
	Note that the above proof also holds for $B^{+}_{r_1}(b_i), D^+_m$ replaced by $B^{-}_{r_1}(b_i), D^-_m.$
	
	Therefore, for any $\eps\in(0,\eps_0),$ it follows 
	\begin{align*}
		\int_{\Omega} |D^2u|^{1+\eps}&=\int_{\Omega_1}|D^2u|^{1+\eps}+\sum_{b_i\in V_1}\int_{\Omega\cap B_{r_1}(b_i)}|D^2u|^{1+\eps}\\&\le C + \sum_{b_i\in V_1}\int_{\Omega\cap B^{+}_{r_1}(b_i)}|D^2u|^{1+\eps} + \sum_{b_i\in V_1}\int_{\Omega\cap B^{-}_{r_1}(b_i)}|D^2u|^{1+\eps}\\&\le C + C^+ +C^-.
	\end{align*}
\end{proof}

\bibliographystyle{amsplain}

\begin{thebibliography}{12}
	
	
	\bibitem{AC}
	E. Andriyanova and S. Chen.
	Boundary $C^{1,\alpha}$ regularity of potential functions in optimal transportation with quadratic cost. \emph{Analysis and PDE} 9 (2016),1483--1496.
	
	\bibitem{Br1}
	Y. Brenier.
	Polar factorization and monotone rearrangement of vector-valued functions. 
	\emph{Comm. Pure Appl. Math.} 44 (1991), no. 4, 375--417.
	
	\bibitem{B} Y. Brenier. D\'ecomposition polaire et r\'earrangement monotone des champs de vecteurs \emph{C. R. Acad. Sci. Paris S\'er. I. Math.} 305 (1987), 805-808. 
	
	\bibitem{C92}
	L. A. Caffarelli. The regularity of mappings with a convex potential. \emph{J. Amer. Math. Soc.} 5 (1992), 99--104.
	
	\bibitem{C92b}
	L. A. Caffarelli. Boundary regularity of maps with convex potentials. \emph{Comm. Pure Appl. Math.} 45 (1992), 1141--1151.
	
	\bibitem{C93}
	L. A. Caffarelli.  A note on the degeneracy of convex solutions to the Monge-Amp\`ere equation.
	\emph{Comm. Partial Differential Equations.} 18 (1993), 1213--1217.
	
	\bibitem{C96}
	L. A. Caffarelli. Boundary regularity of maps with convex potentials. II.
	\emph{Ann. of Math.} 144 (1996), 453--496.
	
	\bibitem{CG} L. Caffarelli, C. Guti\'errez, Real analysis related to the Monge-Amp\`ere equation \emph{Trans. Amer. Math. Soc.} 348 (1996), no. 3, 1075-1092.
	
	\bibitem{CM}  L. A. Caffarelli and R. J. McCann. 
	Free boundaries in optimal transport and Monge-Amp\`{e}re obstacle problems.
	\textit{Ann. of Math.}, 171 (2010), 673--730.
	
	
	\bibitem{CL2}
	S. Chen J. Liu and Xianduo Wang. Global regularity in the Monge-Amp\`ere obstacle problem. Available at arXiv:2307.00262.
	
	\bibitem{CL3}
	S. Chen and J. Liu. Regularity of singular set in optimal transportation. 
	\emph{Invent. Math.} 242 (2025), no. 1, 1–44.
	
	\bibitem{CLL1}
	S. Chen, Y. Li and J. Liu. Optimal (partial) transport to non-convex polygonal domains. Available at arXiv:2311.15655.
	
	\bibitem{CLW1} 
	S. Chen, J. Liu and X.-J. Wang. 
	Global regularity for the Monge-Amp\`ere equation with natural boundary condition. \emph{Ann. of Math.} (2) 194 (2021), no.3, 745--793.
	
	\bibitem{CLW2}
	S. Chen, J. Liu and X.-J. Wang. 
	Boundary regularity for the second boundary-value problem of Monge-Amp\`ere equations in dimension two. Available at arXiv:1806.09482.
	
	\bibitem{CLW3}
	S. Chen, J. Liu and X.-J. Wang.
	$C^{2,\alpha}$ regularity of free boundaries in optimal transportation.  
	\emph{Comm. Pure Appl. Math.} 77 (1) (2024) 731–794.
	
	\bibitem{DF}G. De Philippis, A. Figalli.
	$W^{2,1}$ regularity for solutions of the Monge-Amp\`ere equation. \emph{Invent. Math.} 192 (2013), no. 1, 55-69.
	
	\bibitem{DFS}G. De Philippis, A. Figalli, O. Savin.
	A note on interior $W^{2,1+\eps}$ estimates for the Monge-Amp\`ere equation. \emph{Math. Ann.} 357 (2013), no. 1, 11-22.
	
	\bibitem{D91} Ph. Delano\"e.
	Classical solvability in dimension two of the second boundary value problem associated with the Monge-Amp\`ere operator.
	\textit{Ann. Inst. Henri Poincar\'e, Analyse Non Lin\'eaire}. 8 (1991), 443--457. 
	
	\bibitem{AFi} A. Figalli, The optimal partial transport problem. 
	\textit{Arch. Ration. Mech. Anal.} 195 (2010), 533--560.
	
	
	\bibitem{AFi2}           
	A. Figalli, Regularity properties of optimal maps between nonconvex domains in the plane.
	\textit{Comm. Partial Differential Equations.} 35 (2010), 465--479.
	
	
	
	\bibitem{Fig4} A. Figalli and G. Loeper.
	$C^1$ regularity of solutions of the Monge–Amp\`ere equation for optimal transport in dimension two.
	\textit{Calc. Var. Partial Differential Equations.} 35 (2009), 537--550.
	
	\bibitem{G} C. Guti\'errez. The Monge-Amp\`ere equation. 
	\emph{Progress in Nonlinear Differential Equations and their Applications.} 89. Birkh\"auser/ Springer, Cham, 2016. 
	
	\bibitem{J} F. John. Extremum problems with inequalities as subsidiary conditions. Studies and Essays Presented to R. Courant on his 60th Birthday, Interscience, New York (1948), 187-204.
		
	\bibitem{Levy} B. L\'evy.
	A numerical algorithm for $L_2$ semi-discrete optimal transport in 3D.
	\emph{ESAIM: Mathematical Modelling and Numerical Analysis}, 49 (2015), 1693--1715. 
	
	\bibitem{LW14}       
	J. Liu and X.-J. Wang.
	Interior a priori estimates for the Monge-Amp\`{e}re equation.  
	\emph{Surveys in Differential Geometry} Vol. 19, International Press (2014), 151–177.
	
	\bibitem{leon}
	Leon Simon. Introduction to geometric measure theory. Tsinghua Lectures, 2(2):3–1, 2014.
	
	
	\bibitem{Mer} Q. M\'erigot.
	A multiscale approach to optimal transport.
	\emph{Computer Graphics Forum}, 30 (2011), 1584--1592. 
	
	\bibitem{Mooney}       
	C. Mooney.
	Partial regularity for singular solutions to the MongeAmp\`{e}re equation. 
	\emph{Comm. Pure Appl. Math.} 68 (2015), no. 6, 1066--1084.
	
	\bibitem{CR}
	C. Mooney and A. Rakshit.
	Sobolev regularity for optimal transport maps of non-convex planar domains. SIAM J. Math. Anal. 56 (4) (2024) 4742–4758. 
	
	\bibitem{R}
	R. rockafellar. 
	Convex Analysis. \emph{Princeton Math. Ser.} 28, Princeton Univ. Press, Princeton, N.J., 1970. 
	
	\bibitem{SY}
	O. Savin and H. Yu.
	Regularity of optimal transport between planar convex domains. 
	\textit{Duke Math. J.}, 169 (2020), 1305--1327.
	
	\bibitem{SY2}
	O. Savin, H. Yu. 
	Global $W^{2,1+\epsilon}$ estimates for Monge-Amp\`{e}re equation with natural boundary condition.
	\emph{J. Math. Pures Appl.} 137 (2020) 275–289. 
	
	\bibitem{U1} J. Urbas.
	On the second boundary value problem of Monge-Amp\`ere type.
	\textit{J. Reine Angew. Math.}.  487 (1997), 115--124.
	
	
	
\end{thebibliography}

\end{document}